\newtheorem{thm}{Theorem}
\newtheorem{cor}[thm]{Corollary}
\newtheorem{lem}{Lemma}
\newtheorem*{Q}{Question}
\theoremstyle{definition}
\newtheorem*{definition*}{Definition}
\newtheorem{note}{Note}
\newtheorem*{note*}{Note}
\theoremstyle{remark}
\begin{document}

\title{Distributional Chaos and Dendrites}

\author{Zuzana Roth}



\maketitle

\begin{abstract}

Many definitions of chaos have appeared in the last decades and with them the question if they are equivalent in some more specific spaces.
Our focus will be distributional chaos, first defined in 1994 and later subdivided into three major types (and even more subtypes). 
These versions of chaos are equivalent on a closed interval, but distinct in more complicated spaces. Since dendrites have much in common with the interval, we explore whether or not we can distinguish these kinds of chaos already on dendrites.
In the end of the paper we will also briefly look at the correlation with other types of chaos.

\end{abstract}

{\bf Keywords:} Chaos, Dendrites, Distributional Chaos, Horseshoes.

\section{Introduction}
When we look in the literature for some examples   showing that (in general) various types of Distributional Chaos (DC) are not equivalent (\cite{Stef},\cite{RRH}), we find systems with more complicated structure, spaces with subsets homeomorphic to the circle or at least some triangular (skew product) maps on unconnected spaces. On the other hand, we know that all types of DC are equivalent on the interval, trees and graphs (\cite{MM}, \cite{KKM}, \cite{HaM}, \cite{RL}).
Since dendrites, trees and intervals are so much alike, can it be that all types of  DC are  equivalent on dendrites? We will answer  this question in  section \ref{sec:DC3}. \\
Also if we look into the  literature we can find  different relationships between the original definition of distributional chaos (which required the existence of  DC-pairs) and other types of chaos. How will the situation on dendrites change, if we will require an uncountable DC-scrambled set? In the end of  section \ref{sec:DC3} you will find the relation between  DC-pairs and  uncountable DC-scrambled sets, and in section  \ref{sec:nesp.mn} the relation with other types of chaos.

\section{Terminology}\label{sec:term}
We will use the following notation through the whole paper,  if  not indicated otherwise.
Let $(X,d)$ be a non-empty compact metric space.  A pair $(X,f)$, where $f$ is a continuous self-map acting on $X$, is called a \emph{(topological) dynamical system}.  The \emph{orbit} of a point $x\in X$ is the set $\{f^n(x):n\geq0\}$.

A pair of two different points $(x, y)\in X^2$ is \emph{scrambled} or \emph{Li-Yorke} if 
\begin{equation}\label{Li1}\liminf_{k\to\infty}d(f^k(x),f^{k}(y))=0\end{equation} and 
\begin{equation}\label{Li2}\limsup_{k\to\infty} d(f^k(x),f^{k}(y))>0.\end{equation}
A subset $S \subset X$ is \emph{LY-scrambled} if it contains at least 2 distinct points  and every pair of distinct points in $S$ is scrambled. According to the size of $S$ we say that $f$ is LY$_2$, if $S$ contains a scrambled pair, LY$_\infty$,  if $S$ is an infinite LY-scrambled set, or LY$_u$, if $S$ is an uncountable LY-scrambled set. 
The system $(X,f)$ is usually called \emph{Li-Yorke chaotic} if there exists an uncountable LY-scrambled set.
\\


As we already mentioned, at the beginning, there was a definition of one kind of  DC (see \cite{SS}), this type is called DC1 in these days, later (\cite{BSS})  that type was divided into 3 different types DC1, DC2 and DC3, different in general, but the same in the interval. It also turned out, that DC3 can be a really weak and unstable type of chaos, so in \cite{RRH} appeared a new kind of DC, namely DC2$\frac12$ which as was shown, fixed those problems, but in general it is essentially weaker than DC2.
 (There is also DC1$\frac12$ see \cite{Down}, but we will not discuss this kind in this paper.)  

For a pair $(x, y)$ of
points in $X$, define the \emph{lower distribution function} generated by $f$ as
\begin{equation}\label{LoDF}
\Phi_{(f,x, y)}(\delta)=\displaystyle\liminf_{n\to\infty}\frac{1}{n}\#\{0 \le k \le n;d(f^k(x),f^{k}(y))<\delta\},
\end{equation}
and the \emph{upper distribution function} as 
\begin{equation}\label{UpDF}
\Phi^*_{(f,x, y)}(\delta)=\displaystyle\limsup_{n\to\infty}\frac{1}{n}\#\{0 \le k \le n;d(f^k(x),f^{k}(y))<\delta\},
\end{equation}
where $\#A$ denotes the cardinality of the set $A$.\\ 
A pair $(x, y)\in X^2$ is called \emph{distributionally scrambled of type 1} (or a DC1 pair) if
$$\Phi^*_{(f,x, y)}(\delta)=1, \mbox{  for every $0<\delta\le \text{diam }X$}$$
  and  
  $$ \Phi_{(f,x, y)}(\epsilon)=0, \mbox{  for some  }0<\epsilon\le \text{diam }X ,$$
\emph{distributionally scrambled of type 2} (or a DC$2$ pair) if 
$$\Phi^*_{(f,x, y)}(\delta)=1, \mbox{  for every $0<\delta\le \text{diam }X$}$$
  and  
  $$ \Phi_{(f,x, y)}(\epsilon)< 1,\mbox{  for some  }0<\epsilon\le \text{diam }X ,$$
\emph{distributionally scrambled of type 3} (or a DC$3$ pair) if $$ \Phi_{(f,x, y)}(\delta)<\Phi^*_{(f,x, y)}(\delta), \mbox{  for every $\delta$ in some interval $(a,b),$ where } 0\leq a<b\le \text{diam }X.$$
A subset $S$ of $X$ is \emph{distributionally scrambled of type $i$} (or a DC$i$ set), where $i=1,2,3$, if every pair of distinct points in $S$ is a DC$i$ pair.  
Originally, the dynamical system $(X,f)$ was called \emph{distributionally chaotic of type $i$} (a DC$i$ system), where $i=1,2,3$, if there was a DC$i$ pair (DC$i_2$), 
later the focus was moved to  an uncountable $i$  - distributionally scrambled sets (DC$i_u$).

We can also define both distribution functions at $0$ as limits:
 $\Phi_{(f, x, y)}(0)=  \displaystyle\lim_{\delta\to0^+}\Phi_{(f,x, y)}(\delta)$ and 
 $\Phi^*_{(f, x, y)}(0)=\displaystyle\lim_{\delta\to 0^+}\Phi^*_{(f,x, y)}(\delta)$. Then $(x, y)\in X^2$ is called \emph{distributionally scrambled of type $2\frac12$} (or a DC$2\frac12$ pair) if
$$\Phi_{(f,x, y)}(0)<\Phi^*_{(f,x, y)}(0).$$
We define  DC$2\frac12$ sets and DC$2\frac12$ systems in the same way as for the other 3 versions of distributional chaos.

\begin{note}\label{DCnote}
Notice that $\Phi_{(f,x, y)}(\delta)= 1 - \displaystyle\limsup_{n\to\infty}\frac{1}{n}\#\{0 \le k \le n;d(f^k(x),f^{k}(y)) \ge \delta\} .$
\end{note}

Suppose that there are disjoint compact sets $A,B \subset X$ such that
\begin{equation}\label{podkova}
f(A) \cap f(B) \supset A \cup B.
\end{equation}
Then we say that $f$ has a  {\em horseshoe} or that $A$ and $B$ form a horseshoe for $f$. (Since several definitions of horseshoes have appeared over time,  this general case is  in other literature also known as a {\em strict general horseshoe}).
If $X$ is a graph or dendrite and there are arcs $A,B \subset X$ which intersect at most in their  endpoints, statisfying (\ref{podkova}), then we say that $A, B$ form an {\it arc horseshoe} for $f$. Moreover if the sets $A, B$ are disjoint, 
we say that $A, B$ form a {\it strict arc horseshoe} for $f$.
(For other types of horseshoes see \cite{Koc} or \cite{KKM}.)

The set of limit points of the sequence
$(f^n(x))_{n\in \mathbb N}$ is called the  $\omega${\it-limit set of the point x under f} and denoted by $\omega_f (x)$.
A set $S \subset X$ is called $\omega$-scrambled for $f$ if it contains at least two points and  for any 2 distinct points $x, y \in S$ the following conditions hold:
\begin{enumerate}[1)]
\item $\omega_f (x)\setminus \omega_f(y)$ is uncountable,
\item  $\omega_f (x) \cap \omega_f(y)$ is nonempty,
\item  $\omega_f (x)$ is not contained in the set of periodic points.
\end{enumerate}
A {\it dendrite} is a locally connected continuum (compact connected metric space) 
 containing no subset homeomorphic to the circle (no simple closed curve).
A point of a dendrite is called a {\it branch point} if it is the endpoint of three arcs with disjoint interiors.
A point of a dendrite is called an {\it end point} if for every one of its neighborhoods $U$  there exists a neighborhood $V \subset U$ with a one-point boundary.

\bigskip
%

\section{ Equivalence of different types of DC on dendrites}\label{sec:DC3}

\subsection{ DC1 and DC2}\label{subsec:DC1}

(Un)fortunately, 
there are already several articles showing that we can associate the Gehman dendrite (the topologically unique dendrite whose set of end points is homeomorphic to the Cantor set) and a map $g$ on it with a shift space and the shift map $ \left( \{0,1\}^{\mathbb N_0}, \sigma \right)$. Moreover, we can also build a  subdendrite of the Gehman dendrite associated with any  subshift of the full 2-shift (see e.g. \cite{Koc}, \cite{KKM}, \cite{TD}). That means we can use not just general results but also  results  known from shift spaces.

\begin{figure}[h]
 \centering
 \includegraphics[height=4.2cm]{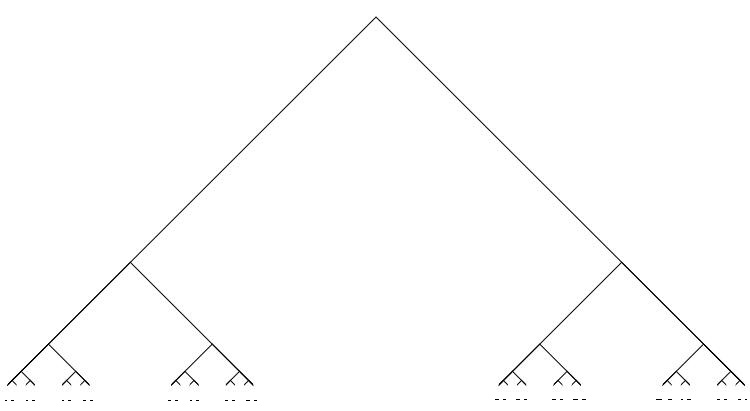}  
  \caption{Gehman dendrite}
   \end{figure}

\begin{lem}[
 \cite{TD} Lem 4.1]
If $X \subset  \{0,1\}^{\mathbb N_0}$
is a subshift then there is a subdendrite $\mathcal G_X$ of the Gehman dendrite $\mathcal G$ invariant under $g$. Let $ \mathcal E_X$ be the set of end points of $\mathcal G_X$, then $( \mathcal E_X, g |_{\mathcal E_X} )$ is topologically conjugate to $(X, \sigma).$
\end{lem}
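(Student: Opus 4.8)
The plan is to build $\mathcal G_X$ by hand out of the address structure of the Gehman dendrite and then read the required conjugacy straight off the construction of $g$. To set up notation, identify the root $r$ and the branch points of $\mathcal G$ with the finite words over $\{0,1\}$, the root being the empty word $\emptyset$; identify the end point set $\mathcal E$ with $\{0,1\}^{\mathbb N_0}$ via $\xi\mapsto\lim_n\xi|_n$, where $\xi|_n$ is the length-$n$ prefix of $\xi$; and write $[w,wi]$, for $i\in\{0,1\}$, for the arc joining the branch points $w$ and $wi$, so that the arc from $r$ to a branch point $w=w_1\cdots w_n$ is $[\emptyset,w]=[\emptyset,w_1]\cup[w_1,w_1w_2]\cup\cdots\cup[w_1\cdots w_{n-1},w]$. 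Recall that $g$ collapses the two initial arcs $[\emptyset,0]$ and $[\emptyset,1]$ onto the root and, for each $i\in\{0,1\}$, maps the subtree of $\mathcal G$ consisting of the points whose address begins with $i$ onto $\mathcal G$ by the canonical ``delete the first symbol'' homeomorphism; thus $g(iw)=w$ on branch points and $g(i\xi)=\xi$ on end points, so that $g|_{\mathcal E}=\sigma$ under $\mathcal E\cong\{0,1\}^{\mathbb N_0}$. (The exact convention at the root is the one of \cite{TD}; only the properties just listed will be used.)

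Next I would introduce the set $\mathcal P$ of \emph{admissible prefixes}, i.e.\ all finite words that are a prefix of some element of $X$. Note that $\mathcal P$ is \emph{prolongable} --- every $w\in\mathcal P$ has $wi\in\mathcal P$ for some $i\in\{0,1\}$ --- and that, since $X$ is closed, $X=\{\xi\in\{0,1\}^{\mathbb N_0}:\xi|_n\in\mathcal P\text{ for all }n\}$. I would then put
\[
\mathcal G_X=\overline{\bigcup_{w\in\mathcal P}[\emptyset,w]}=\Bigl(\bigcup_{w\in\mathcal P}[\emptyset,w]\Bigr)\cup X ,
\]
with the closure taken in $\mathcal G$. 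Since every arc $[\emptyset,w]$ contains the root, their union is connected, hence $\mathcal G_X$ is a subcontinuum of the dendrite $\mathcal G$; a subcontinuum of a dendrite is a dendrite, so $\mathcal G_X$ is a subdendrite. Its end point set is precisely $X$: each $\xi\in X$ is already an end point of $\mathcal G$, hence of $\mathcal G_X$; each nonempty $w\in\mathcal P$ lies in the interior of the sub-arc $[w^-,wi]\subset\mathcal G_X$, where $w^-$ is the prefix of $w$ of length $|w|-1$ and $wi\in\mathcal P$ by prolongability; and, unless $X$ is a single fixed point --- a trivial case to be set aside --- both $0$ and $1$ lie in $\mathcal P$, so the root lies on $[\emptyset,0]\cup[\emptyset,1]\subset\mathcal G_X$ and is not an end point either. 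Hence $\mathcal E_X=X$ under $\mathcal E\cong\{0,1\}^{\mathbb N_0}$.

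Finally I would check invariance and extract the conjugacy. For $v\in\mathcal P$ with $|v|\ge1$ write $v=jv'$ with $j\in\{0,1\}$; choosing $\xi=jv'\cdots\in X$ gives $\sigma(\xi)=v'\cdots\in X$, hence $v'\in\mathcal P$, and likewise $v'i\in\mathcal P$ whenever $vi\in\mathcal P$. So $g$ maps the arc $[v,vi]$ (which lies in the $j$-subtree) onto $[v',v'i]\subset\mathcal G_X$; it maps the two initial arcs $[\emptyset,0],[\emptyset,1]$ to the root $\emptyset\in\mathcal P\subset\mathcal G_X$; and it maps each end point $\xi\in X$ to $\sigma(\xi)\in X\subset\mathcal G_X$, using $\sigma(X)\subseteq X$. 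Because $g$ is continuous and $\mathcal G_X$ is the closure of the union of all these arcs together with $X$, this yields $g(\mathcal G_X)\subseteq\mathcal G_X$. The homeomorphism $\mathcal E\cong\{0,1\}^{\mathbb N_0}$ restricts to a homeomorphism $\mathcal E_X\cong X$, and since $g|_{\mathcal E}=\sigma$ it intertwines $g|_{\mathcal E_X}$ with $\sigma|_X$; therefore $(\mathcal E_X,g|_{\mathcal E_X})$ is topologically conjugate to $(X,\sigma)$.

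I do not expect a deep difficulty here: the work is essentially bookkeeping. The one point that needs care is the behaviour of $g$ near the root --- which initial arc is collapsed, and to where --- so that the identity $g([v,vi])=[v',v'i]$ and the continuity of $g|_{\mathcal G_X}$ are unambiguous; one must also keep in mind that the equality $\mathcal E_X=X$ genuinely uses prolongability of $\mathcal P$, i.e.\ that $X$ is a subshift and not merely a closed set, together with the exclusion of the one-point subshift.
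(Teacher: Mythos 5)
The paper does not prove this lemma at all --- it is imported verbatim from \cite{TD} (Lemma 4.1) --- so there is no internal proof to compare against; your construction (the subdendrite spanned by the arcs over admissible prefixes, closed up by adding the end points corresponding to $X$, with invariance and the conjugacy read off the shift action of $g$ on addresses) is exactly the standard argument used in that source and in \cite{Koc}, \cite{KKM}, and it is sound. The only loose end is the case you set aside, the one-point subshift $X=\{0^\infty\}$ or $\{1^\infty\}$: there the end point set of any nondegenerate subdendrite has at least two points, so the statement survives only if one admits the degenerate subdendrite $\mathcal G_X=\{0^\infty\}$ (allowed by the paper's definition of a dendrite), and a sentence handling or explicitly excluding that case would close the argument.
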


\begin{note}
Notice, that  all $x \in \mathcal G_X \setminus \mathcal E_X$ are eventually fixed and so all the interesting dynamics happen exclusively in $\mathcal E_X$ and so if we are looking for DC-pairs/sets we need only look in $\mathcal E_X$ (for more details see the proof of Lem. 4 in  \cite{Koc}).
\end{note}

\begin{thm}[
\cite{Down} Th 1.1]\label{ThDown}
 Assume that a topological dynamical system $(X, T)$ has positive
topological entropy. Then the system possesses an uncountable DC2-scrambled set.
\end{thm}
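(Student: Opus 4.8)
\emph{Proposed approach.} The plan is to recast DC2 as a "mean" version of the Li--Yorke condition and then to manufacture an uncountable scrambled set from an ergodic measure of positive entropy furnished by the variational principle. Write $D=\operatorname{diam}X$. Using Markov's inequality and Note~\ref{DCnote}, one first checks that a pair $(x,y)$ is a DC2 pair if and only if
\[
\liminf_{n\to\infty}\frac1n\sum_{k=0}^{n-1}d(T^kx,T^ky)=0
\qquad\text{and}\qquad
\limsup_{n\to\infty}\frac1n\sum_{k=0}^{n-1}d(T^kx,T^ky)>0 .
\]
Indeed, if the Ces\`aro averages of $d(T^kx,T^ky)$ tend to $0$ along some $n_i\to\infty$ then, for each $\delta>0$, the density of indices $k\le n_i$ with $d(T^kx,T^ky)\ge\delta$ is at most $(\text{average})/\delta\to0$, so $\Phi^*_{(T,x,y)}(\delta)=1$; conversely $\Phi^*_{(T,x,y)}(\delta)=1$ for every $\delta$ forces the Ces\`aro average to be $\le\delta+\delta D$ along a subsequence, hence $\liminf=0$. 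The second equivalence is symmetric, with $\liminf$ and $\limsup$ exchanged. So it suffices to build an uncountable set every pair of whose distinct points is \emph{mean Li--Yorke} in the above sense.

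\emph{A positive-entropy model.} By the variational principle fix an ergodic Borel probability $\mu$ with $h_\mu(T)>0$; then $\mu$ is not a point mass, the diagonal self-joining $\Delta_*\mu$ is concentrated on pairs at distance $0$, and, since $h_\mu(T)>0$, the Pinsker factor is proper, so there is an ergodic $T\times T$-invariant measure $\lambda$ on $X\times X$ with $c:=\int d\,d\lambda>0$ (for instance an ergodic component of the relatively independent self-joining of $\mu$ over its Pinsker factor). Positive entropy is precisely what allows us to \emph{interpolate} between $\Delta_*\mu$ and $\lambda$ along a single orbit: choose a finite partition $\mathcal P$ of $X$ with small-diameter pieces and $h_\mu(T,\mathcal P)>0$ (refine any positive-entropy partition---refining shrinks the pieces and does not lower entropy), pass to the associated symbolic model via Krieger's generator theorem (or the itinerary map of a refining sequence of such partitions), and exploit the combinatorial richness of a positive-entropy process (Sinai's factor theorem, or Kerr--Li independence sets of positive density) to produce orbit segments that, on prescribed long windows, either look $\Delta_*\mu$-generic---so the two coordinates stay within $\operatorname{diam}\mathcal P$ of each other---or look $\lambda$-generic---so the windowed Ces\`aro distance is $\approx c$. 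Closeness in $d$ is recovered because agreement of $\mathcal P$-itineraries in a coordinate forces distance $<\operatorname{diam}\mathcal P$ there, and diagonalising over finer partitions upgrades this to genuine mean proximality.

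\emph{The Cantor scheme.} One then indexes points by $\theta\in\{0,1\}^{\mathbb N}$, building $x_\theta$ by concatenating successive windows with a bifurcation at the $k$-th window governed by $\theta_k$, the window lengths growing fast enough (super-exponentially) that each window dominates all earlier ones. The schedule is arranged so that (i) at the ends of the "$\Delta$-type" windows the Ces\`aro distance of every pair $x_\theta,x_{\theta'}$ tends to $0$, giving $\liminf=0$, and (ii) at the first window where $\theta$ and $\theta'$ differ the behaviour is "$\lambda$-type for that pair" and the window occupies a fixed positive fraction of the elapsed time, so the Ces\`aro distance there stays $\ge c/2$ along a subsequence, giving $\limsup>0$. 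Mycielski-type bookkeeping then turns $\{x_\theta\}$ (or a Cantor subset of its closure) into an uncountable set all of whose pairs are distinct and mean Li--Yorke, which by the reduction above is the desired DC2-scrambled set.

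\emph{Main difficulty.} I expect the crux to be the interplay of the last two steps: carrying the measure-theoretic "interpolation of generic points" over to honest orbits in the metric space $(X,d)$, and---more delicately---coordinating the schedule so that \emph{every} pair from the uncountable family genuinely oscillates, i.e.\ has positive $\limsup$ as well as zero $\liminf$ of the Ces\`aro distance, rather than having its averages converge, which is all that a single ergodic joining would provide.
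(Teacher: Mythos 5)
This statement is not proved in the paper at all: it is quoted verbatim from Downarowicz (\cite{Down}, Th.~1.1), so there is no in-paper argument to compare against, and your proposal has to stand or fall as a proof of Downarowicz's theorem itself. Your opening reduction is correct and is indeed how the literature frames the result: the Markov-inequality argument showing that a DC2 pair is exactly a mean Li--Yorke pair (lower Ces\`aro limit $0$, upper Ces\`aro limit positive) is sound. The choice of an ergodic $\mu$ with $h_\mu(T)>0$, the relatively independent self-joining over the Pinsker factor, and a Mycielski/Cantor-scheme finish are also the right ingredients.

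The genuine gap is that the heart of the theorem --- the ``interpolation'' step --- is asserted rather than proved, and the mechanism you propose for it does not work in a general system. Concatenating ``$\Delta$-generic'' and ``$\lambda$-generic'' windows along a single orbit presupposes that prescribed symbolic concatenations are realized by actual points of $X$; Krieger's generator theorem only gives a measure-theoretic isomorphism to a subshift, and an arbitrary positive-entropy system has no specification or shadowing property, so the itinerary sequences you build need not be itineraries of any point (and even when they are, you must control this simultaneously for uncountably many points and \emph{all} their pairs, which is exactly the difficulty you flag at the end but do not resolve: a single ergodic joining makes the Ces\`aro averages of a.e.\ pair converge, so no off-the-shelf genericity statement produces the required oscillation). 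Downarowicz's actual proof circumvents this by a delicate selection of points inside the fibers over the Pinsker factor, using nonatomicity of the conditional measures and quantitative entropy estimates (and a later alternative proof by Huang, Li and Ye runs through local entropy theory); neither reduces to ``orbit segments that look generic on prescribed windows.'' As written, your argument establishes the equivalence DC2 $\Leftrightarrow$ mean Li--Yorke for pairs and correctly identifies the strategy, but the step from positive entropy to the existence of even one mean Li--Yorke pair, let alone an uncountable scrambled set, remains unproven.
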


\begin{thm}[
\cite{Pikula} Ex 4.1]\label{ThPik}
There exists a subshift with positive topological entropy without DC1 pairs.
\end{thm}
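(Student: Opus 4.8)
By Theorem~\ref{ThDown} every positive-entropy system already carries an uncountable DC2-scrambled set, so the entire task is to produce \emph{some} subshift $X\subseteq\{0,1\}^{\mathbb N_0}$ of positive topological entropy containing \emph{no} DC1 pair. With the usual metric on the shift, writing $G_N(x,y)=\{k\ge 0: x_{[k,k+N)}=y_{[k,k+N)}\}$, one has $\Phi^*_{(\sigma,x,y)}(2^{-N})=\overline d\,(G_N)$ and $\Phi_{(\sigma,x,y)}(2^{-N})=\underline d\,(G_N)$ (upper/lower density), so a DC1 pair is exactly a pair with $\overline d\,(G_N)=1$ for every $N$ but $\underline d\,(G_{N_0})=0$ for some $N_0$. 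Thus I want $X$ such that for every $x\ne y$ in $X$ either $\overline d\,(G_N)<1$ for some $N$ (the pair is not proximal at all scales), or $\underline d\,(G_N)>0$ for every $N$. Since proximal-at-all-scales pairs must exist (again by Theorem~\ref{ThDown}), the real content is the second alternative for those pairs.

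\textbf{Construction.} I would build $X$ by a hierarchical block construction: for each generation $k$ fix a family $\mathcal B_k$ of admissible $k$-blocks, all of a common length $L_k$ with $L_k\mid L_{k+1}$, each $(k{+}1)$-block a concatenation of $k$-blocks, and let $X$ consist of all sequences all of whose subwords occur inside some $k$-block. Two competing requirements must be arranged simultaneously. First, \emph{exponential growth}: $\#\mathcal B_k\ge e^{cL_k}$ for a fixed $c>0$, which forces $h_{\mathrm{top}}(X)\ge c>0$ since the $k$-blocks furnish at least $\#\mathcal B_k$ distinct words of length $L_k$ in $X$. Second, a \emph{coherence rule} linking local agreement to density of agreement: the $\mathcal B_k$ are chosen so that whenever two $k$-blocks share a common sub-window of length at least a prescribed threshold $\ell_{k-1}$ (with $L_{k-1}\ll\ell_{k-1}\ll L_k$), they are forced to coincide on a proportion at least $\gamma$ of their length, where $\gamma>0$ is independent of $k$. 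Concretely one picks $\mathcal B_k$ as a large ``window-separated code'' among concatenations of $(k{-}1)$-blocks, in which any two members either stay $\ell_{k-1}$-separated throughout or are locked together on a positive fraction of coordinates; a Gilbert--Varshamov type count shows such a family can still have exponential size once $\ell_{k-1}/L_k$ is small, and the code is designed so that the allowed adjacencies keep $X$ an honest subshift realizing all of the $\mathcal B_k$.

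\textbf{No DC1 pair.} Fix $x\ne y$ in $X$ and assume the pair is proximal at every scale (otherwise $\overline d\,(G_N)<1$ for some $N$ and we are done). Fix $N$. Since $\Phi^*_{(\sigma,x,y)}(2^{-N})=1$, for every large $j$ there are arbitrarily long intervals of shifts $k$ on which $\sigma^kx$ and $\sigma^ky$ agree on a window far exceeding $L_j$; locating such a window inside the generation-$j$ block decompositions of $x$ and $y$ and invoking the coherence rule (first at level $j$, then propagated down through the generations) produces, inside a surrounding interval of comparable length, a set of shifts $k$ of density at least $\gamma$ on which $\sigma^kx$ and $\sigma^ky$ still agree on a window of length at least $N$. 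As this recurs along arbitrarily long intervals, $\underline d\,(G_N)\ge\gamma>0$, with $\gamma$ independent of $N$. Hence $\Phi_{(\sigma,x,y)}(2^{-N})>0$ for \emph{every} $N$, so $(x,y)$ is not a DC1 pair; thus $X$ has positive entropy and no DC1 pair.

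\textbf{Main obstacle.} The hard part is the design of the coherence rule: it must be strong enough to force the positive-density recurrence of proximality used in the last step, weak enough to leave room for the exponential block count that supplies positive entropy, and internally consistent so that the families $\mathcal B_k$ are nonempty, nested, genuinely grow, and assemble into a bona fide subshift in which every $k$-block is realized. In effect one must separate DC1 from DC2 ``by hand'' at every scale while protecting the entropy, and this combinatorial balancing act is the whole substance of the example; the entropy lower bound and the translation into the $\overline d/\underline d$ language are routine by comparison. An alternative to the ad hoc hierarchy is to exhibit a concrete combinatorial subshift --- for instance a suitably chosen coded system or spacing shift --- and verify these two properties directly, which may well be the route taken in \cite{Pikula}.
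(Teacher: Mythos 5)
There is a genuine gap, and it sits exactly where you yourself locate ``the whole substance of the example.'' First, a point of comparison: the paper does not prove this statement at all --- it is imported as a black box from Piku{\l}a's Example~4.1 --- so the only question is whether your sketch stands on its own, and it does not. The families $\mathcal B_k$ satisfying simultaneously exponential growth and your ``coherence rule'' are never exhibited: the Gilbert--Varshamov-type count is asserted rather than performed, no argument shows the blocks nest into an honest subshift realizing every $\mathcal B_k$, and the coherence rule is formulated for aligned block-to-block comparisons, whereas the agreement windows between $\sigma^k x$ and $\sigma^k y$ need not respect either point's generation-$j$ decomposition, so you are really comparing a block of $x$ with an offset concatenation of blocks of $y$, which your code condition does not control.

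More seriously, even granting the coherence rule, the last step does not give $\underline d\,(G_N)>0$. Your rule is local in time: it produces agreement of density $\ge\gamma$ inside a neighborhood of an interval where a long shared window happens to occur. Lower density is a statement about \emph{every} sufficiently long initial segment. Nothing in your setup prevents a pair from alternating long epochs containing shared windows with long epochs containing no shared window of length $\ell_j$ at all; in the latter epochs the rule is vacuous, the density of $G_{N_0}$ there can be $0$, and then $\underline d\,(G_{N_0})=0$ while $\overline d\,(G_N)=1$ for all $N$ --- which is precisely a DC1 pair. ``Recurs along arbitrarily long intervals'' only controls upper density. To exclude DC1 pairs you need a global rigidity statement: once $x$ and $y$ share a sufficiently long window, they are forced to agree on a definite fraction of \emph{all} later times (for instance, a hierarchy in which sharing a generation-$j$ block forces sharing a fixed proportion of every subsequent higher-generation block), and this must coexist with exponential word growth. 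That is exactly what a concrete construction such as Piku{\l}a's supplies and what your sketch leaves unproved; as written, the argument does not establish the theorem.
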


As a combination of the previous theorems and lemma we obtain the following result:

\begin{thm}\label{ThDC1}
DC2 does not imply DC1 on dendrites.
\end{thm}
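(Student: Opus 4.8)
The plan is to assemble the three previously stated results into a short chain of implications. First, I would invoke Theorem~\ref{ThPik}: there exists a subshift $X \subset \{0,1\}^{\mathbb N_0}$ with positive topological entropy that has no DC1 pairs. Next, I would apply the \cite{TD} Lemma quoted above to this particular $X$, obtaining a subdendrite $\mathcal G_X$ of the Gehman dendrite, invariant under the Gehman map $g$, such that $(\mathcal E_X, g|_{\mathcal E_X})$ is topologically conjugate to $(X, \sigma)$.

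The second step is to transfer the two relevant dynamical properties across this conjugacy and the inclusion $\mathcal E_X \subset \mathcal G_X$. Positive topological entropy is a conjugacy invariant, so $(\mathcal E_X, g|_{\mathcal E_X})$ has positive entropy; since $\mathcal E_X$ is a closed (hence compact) invariant subset of $\mathcal G_X$, the entropy of $(\mathcal G_X, g)$ is at least that of its subsystem, so $(\mathcal G_X, g)$ has positive topological entropy. By Theorem~\ref{ThDown}, $(\mathcal G_X, g)$ therefore possesses an uncountable DC2-scrambled set, so $(\mathcal G_X, g)$ is DC2 (in either the DC$2_2$ or DC$2_u$ sense). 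Thus we have a dynamical system on a dendrite that is DC2.

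Finally I would argue that $(\mathcal G_X, g)$ has no DC1 pair. By the Note following the \cite{TD} Lemma, every point of $\mathcal G_X \setminus \mathcal E_X$ is eventually fixed, so any DC1 pair must consist of points both lying in $\mathcal E_X$ (a pair with an eventually fixed coordinate cannot have $\Phi^*$ identically $1$ together with the required collapsing behaviour — or more simply, the cited argument shows all nontrivial distributional behaviour is confined to $\mathcal E_X$). A DC1 pair in $\mathcal E_X$ would, via the conjugacy, yield a DC1 pair in $(X,\sigma)$ — here one uses that the distribution functions $\Phi_{(f,x,y)}$ and $\Phi^*_{(f,x,y)}$ depend only on the topological dynamics up to the uniform equivalence of metrics on compact spaces, so the DC1 condition is preserved under topological conjugacy. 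This contradicts the choice of $X$ in Theorem~\ref{ThPik}. Hence $(\mathcal G_X, g)$ is DC2 but not DC1, proving DC2 does not imply DC1 on dendrites.

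The only genuinely non-routine point is the claim that being a DC1 pair is a conjugacy invariant and that no DC1 pair can involve a point outside $\mathcal E_X$; both are essentially folklore but deserve a sentence of justification (for the first, one checks that a homeomorphism between compact metric spaces distorts distances in a controlled way so that $\Phi^* \equiv 1$ and $\Phi(\epsilon)=0$ transfer; for the second, one refers to the proof of Lem.~4 in \cite{Koc} as the Note already does). Everything else is a direct citation, so the proof is short.
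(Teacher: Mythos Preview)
Your proposal is correct and follows exactly the same approach as the paper: combine Piku{\l}a's subshift (Theorem~\ref{ThPik}) with the Gehman-dendrite embedding (the \cite{TD} lemma and its accompanying Note) and Downarowicz's theorem (Theorem~\ref{ThDown}) to produce a dendrite map that is DC2 but has no DC1 pair. The paper itself presents this only as a one-line ``combination of the previous theorems and lemma,'' so your write-up is in fact more detailed than the original, particularly in spelling out the conjugacy-invariance of the DC1 condition and the reason DC1 pairs must lie in $\mathcal E_X$.
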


\subsection{ DC2 and DC3}\label{subsec:DC3}
While for DC1 and DC2 we could go directly to known results on subshifts of the full 2-shift, for DC3 and DC2 we will start with a less complicated structure. The dendrite $\mathcal D$ will be a comb-style dendrite and we will use a level-inductive aproach for the construction of our dendrite.\\
In the following construction  $ I~$denotes the unit interval $  \langle0,1\rangle \times \{0\},$ $(s,t)=\{s\} \times \{t\}$ and if we will need a metric, $d$ will be the max metric in $\mathbb R^2.$\\
\textbf{Construction of the dendrite $\mathcal D$:}\\
$$\mathcal D = I \cup \left ( \bigcup_{n \in \mathbb N}  \left (  Z^{(n)}   \times \langle 0, h_n \rangle \right ) \right ) $$ 
where $ Z^{(n)} = \left \{ z_j ^{(n)}  = \dfrac{j}{3^n} : j \in J_n \right\} ,\ 
J_n = \left \{ j : \ 1\le j  \le {3^n} \wedge \ 3 \nmid j  \right \} ,
\  h_n = \dfrac{1}{3^{n}} 
.$ \\
The set $ \left (\left ( \bigcup_{n \in \mathbb N}  \left (  Z^{(n)}   \times \{ h_n \} \right ) \right ) \cup \left\{(0,0), (1,0)\right\}  \right ) $ is the set of endpoints of $\mathcal D.$\\
 Moreover let's denote $l_n=\# \left ( \bigcup_{i=1}^n    Z^{(i)}   \right ) = 3^n -1$. \\


We can easily imagine the dendrite construction by an inductive approach:
 we start with 1 horizontal line (the interval $ I$). Then we add levels of  ``spikes:''  in every next level we add 2 equally distributed spikes between every 2 ``old'' spikes.
 Spikes in the $n$-th level emanate from the points of  $Z^{(n)} $ and have  height $h_n= {1}/{3^{n}} $. So the first level has 2 ``spikes''  at $\frac13 $  and $\frac23$ with height $\frac13$, in the second level we add 6 spikes of  height $\frac19$ at $\frac19, \frac29, \frac49, \frac59, \frac79, \frac89$ (all $i/9$ between 0 and 1 except for $i$-s which are multiples of 3, those are already in the previous level, so after finishing the second level we have 8 spikes total) and so on. \\

\begin{figure}[h]
 \centering
\includegraphics[width=0.75\textwidth]{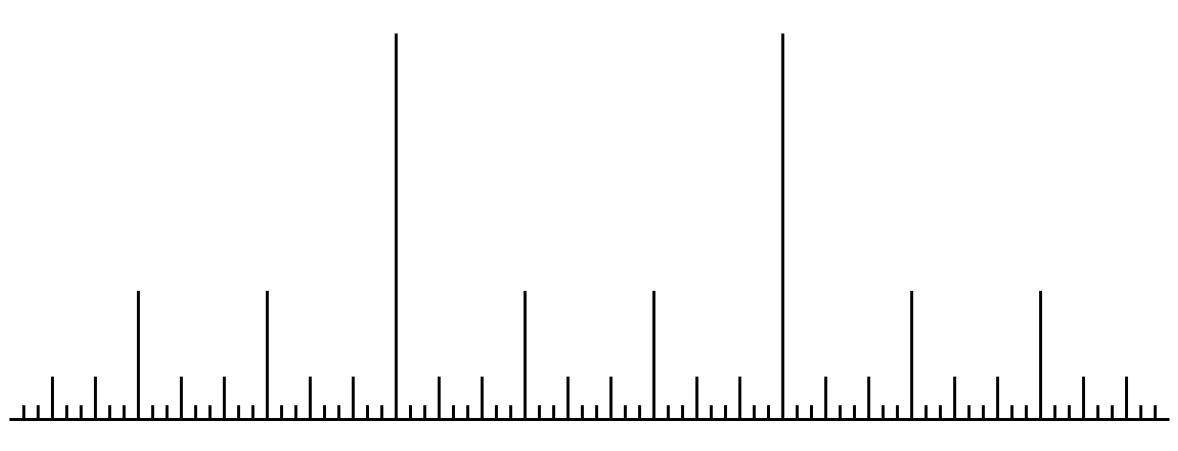}
\caption{Construction of the dendrite $\mathcal D$ - first 4 levels.}
\label{dendriteDC3}
\end{figure}

\textbf{Construction of the map $f: \mathcal D \to \mathcal D$:}\\
Let's first construct helping maps. For  $n \in \mathbb N $ and $ j \in J_n $ denote by $\phi_{(n,j)} , \,  \psi_{(n,j)}$ the increasing linear functions such that:

\begin{equation*}
		\psi_{(n,j)}\left( \left \langle 0, \frac23h_n \right\rangle \right)
			= \left \{ \begin{array}{l  l}
						  \mbox{for odd  $n$} &
						 \left \{
						   \begin{array}{l  l}
							\left\langle z_j^{(n)}, z_{j+1}^{(n)}\right \rangle , & \mbox{  if  $j=3a+1$ } \\[0.75em]
							\left\langle z_j^{(n)}, z_{j+2}^{(n)}\right \rangle , & \mbox{ if }  j=3a+2 \neq l_n  \\[0.75em]
							\left\langle z_{l_n}^{(n)}, z_{l_{n+1}}^{(n+1)}\right \rangle,   & \mbox{  if } j  =l_n = 3^n -1  \\ [0.75em]
							\end{array} \right. \\	
							& \\[0.75em]
				
						 \mbox{for even $n$}  &
						 \left \{
						   \begin{array}{l  l}
							\left\langle z_{j-1}^{(n)}, z_{j}^{(n)}\right \rangle  , &  \mbox{  if  $j=3a+2$ } \\[0.75em]
							\left\langle z_{j-2}^{(n)}, z_{j}^{(n)}\right \rangle , & \mbox{ if }  j=3a+1 \neq 1   \\[0.75em]
							\left\langle z_{1}^{(n+1)}, z_{{1}}^{(n)}\right \rangle,   & \mbox{  if } j = 1    \\ 
							\end{array} \right. 
  						\end{array}
								\right. 
								 a \in \mathbb N_0
	\end{equation*}
	
	\begin{equation*}
		\phi_{(n,j)}\left( \left \langle  \frac23h_n, h_n \right\rangle \right)
			= \left \{ \begin{array}{l  l}
						  \mbox{for odd  $n$} &
						 \left \{
						   \begin{array}{l  l}
							 \langle  0 , h_n \rangle  , & \mbox{  if } j \neq 3^n -1 \\ [0.5em]
							 \langle  0 , h_{n+1} \rangle,   & \mbox{  if } j = 3^n -1    \\ \end{array} \right. \\
							\\
						\mbox{for even $n$}  &
						 \left \{
						   \begin{array}{l  l}
							 \langle  0 , h_n \rangle , & \mbox{  if } j \neq 1 \\[0.5em]
							 \langle  0 , h_{n+1} \rangle,   & \mbox{  if } j = 1     \end{array} \right. \\
  						\end{array}
								\right.
	\end{equation*} \\
	Now we use the functions $\psi_{(n,j)}$ and $\phi_{(n,j)}$ to construct $f$ in three parts:
	\begin{enumerate}
		\item $f|_I=id$. \\
		\item for $(x,y) \in \left ( \{z_j^{(n)}\}  \times ( 0, \frac23 h_n \rangle \right)$: \\
		$f(x,y)=f\left( z_j^{(n)},y\right)=\left( \psi_{(n,j)}(y) ,0  \right)$
		\item for $(x,y) \in \left (\{ z_j^{(n)}\}  \times ( \frac23 h_n, h_n \rangle \right)$: \\
		$f(x,y)=f\left( z_j^{(n)},y\right)=\left( \psi_{(n,j)}(\frac23h_n) ,\phi_{(n,j)}(y)  \right)$

	\end{enumerate}
%
\begin{figure}[H]
 \centering
\includegraphics[width=0.70\textwidth]{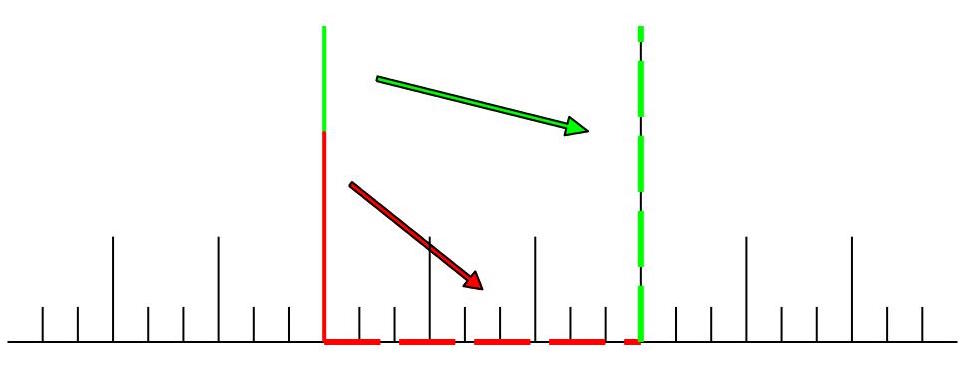}
\includegraphics[width=0.70\textwidth]{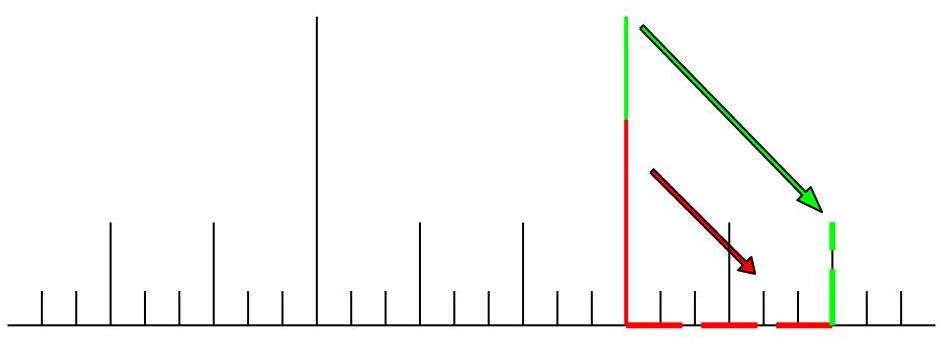}
\caption{Mapping ``spikes'' in odd levels by the  map $f$ .}
\label{dendriteDC3a}
\end{figure}
\begin{figure}[H]
 \centering
\includegraphics[width=0.85\textwidth]{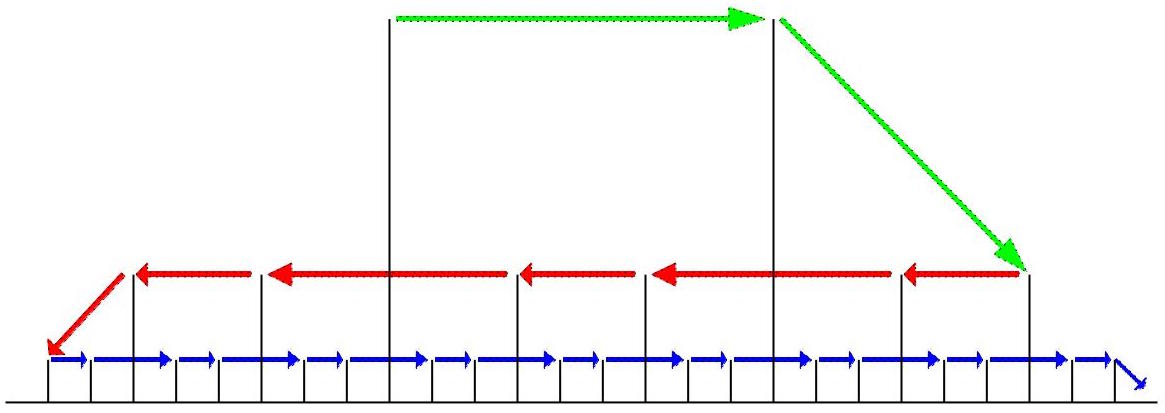}
\caption{ The map $f$- direction of mapping spikes/ trajectory of the point $\left(\frac13, \frac13 \right)$.}
\label{dendriteDC3b}
\end{figure}

\begin{lem}\label{spoj+}
	\begin{enumerate}[a)]
	\item\label{aa} The set $ \mathcal D$ is a dendrite.
	\item\label{a} The map $f$ is continuous. 
	\item\label{be} If $x  \in \mathcal D \setminus  \left ( \bigcup_{n \in \mathbb N}  \left ( Z^{(n)}   \times \{ h_n \} \right ) \right ) $ then $x$ is an eventually  fixed point. 
	\item \label{ce} If  $x, y \in   \bigcup_{n \in \mathbb N}  \left (  Z^{(n)}   \times \{ h_n \}  \right ) $, then 
$\displaystyle\lim_{n \to \infty} d (f^n(x), f^n(y)) = 0.$
	\end{enumerate}
\end{lem}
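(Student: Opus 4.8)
The plan is to take the four claims in turn; (a) and (b) amount to checking that $(\mathcal D,f)$ is a genuine dynamical system, while (c) and (d) describe the orbit structure and carry the real content. For (a) I would verify the defining properties of a dendrite directly. The set is bounded, and it is closed because any accumulation point not already lying on finitely many spikes is a limit of spike‑tops $(z_j^{(n)},h_n)$ with $n\to\infty$, and since $h_n=3^{-n}\to0$ such a limit lies on $I\subseteq\mathcal D$; hence $\mathcal D$ is compact. It is arcwise connected since each spike is an arc glued to the arc $I$ at a single point. Local connectedness is obvious except at points $p=(x_0,0)$ of $I$, where a ball $B(p,\varepsilon)\cap\mathcal D$ is a sub‑arc of $I$ together with the connected portions inside the ball of those spikes whose bases lie on that sub‑arc, hence is connected. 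Finally $\mathcal D$ has no simple closed curve: from a spike point above its base the only two local directions are ``up'' (a dead end, so unusable by a loop) and ``down'' (a single direction), so a simple closed curve cannot meet a spike interior and would have to lie in the arc $I$ — impossible. (One could also note that $\mathcal D$ is the closure of the increasing union of the finite trees $I\cup\bigcup_{i\le n}(Z^{(i)}\times\langle0,h_i\rangle)$.)

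For (b), on $I$ the map is the identity and on the lower, resp. upper, part of each spike it is affine in the second coordinate, so only the gluing set needs attention. There I would check agreement at the base $y=0$ (which requires $\psi_{(n,j)}(0)=z_j^{(n)}$, so that $f$ matches the identity on $I$), at the level $y=\tfrac23h_n$ (which requires $\phi_{(n,j)}(\tfrac23h_n)=0$), and at the points of $I$, the only place where infinitely many spikes accumulate: if $p_k=(z_{j_k}^{(n_k)},y_k)\to(x_0,0)$ then $y_k\to0$, the vertical coordinate of $f(p_k)$ is at most $h_{n_k}$ (small when $n_k$ is large, and when $n_k$ stays bounded one reduces to continuity at the base of a single spike), while the horizontal coordinate of $f(p_k)$ lies in the image interval of $\psi_{(n_k,j_k)}$, which contains $z_{j_k}^{(n_k)}$ and has diameter $O(3^{-n_k})$, hence converges to $x_0$.

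For (c) and (d) the computational core is two observations. First, $f$ sends each spike‑top to another spike‑top, advancing to the ``next'' end point of level $n$ and, on reaching the extreme one, jumping to the extreme end point of level $n+1$; consequently all end‑point orbits eventually run along one common forward trajectory, entering each level $m$ at a fixed extreme end point $e_m$. Second, for a non‑top spike point of height $\le\tfrac23h_n$ a single step lands in $I$, while for height in $(\tfrac23h_n,h_n)$ the ``gap from the top'' $h_n-y$ is tripled on an ordinary step and preserved on the step that carries the point to the next level. Claim (c) then follows: points of $I$ are fixed, and a spike point that were never eventually fixed would have to remain forever in upper parts, so its gap would be non‑decreasing and bounded by $\tfrac13$; only finitely many level‑raising steps would force infinitely many triplings, which is impossible, so the level would tend to infinity and the gap would be forced below $\tfrac13h_m\to0$, contradicting that it is positive and non‑decreasing. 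For (d), given spike‑tops $x,y$ at levels $n_1\le n_2$, the first observation gives that for every $\ell>n_2$ both orbits enter level $\ell$ at $e_\ell$ and then traverse it in the same number $s_\ell$ of steps, so their entry times differ by a fixed constant $c$ and $f^n(x)=f^{n-c}(y)$ for all large $n$; hence $d(f^n(x),f^n(y))=d(f^{n-c}(y),f^n(y))$, and for large $n$ the points $f^{n-c}(y),\dots,f^n(y)$ all lie in one level $m=m(n)\to\infty$ (possibly spilling into $m+1$), within which consecutive trajectory points are at distance $\le 2\cdot3^{-m}$ in the max metric and the transition step moves by $\le 3^{-m}$, so $d(f^n(x),f^n(y))\le 2c\cdot3^{-m(n)}\to0$.

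The step I expect to be the main obstacle is the bookkeeping in (c) and (d): making airtight that a non‑top orbit cannot keep escaping to ever higher levels (the ``gap is non‑decreasing, bounded, and eventually tripled'' dichotomy, handled carefully when level‑raising steps are interleaved with ordinary ones), and checking that the shift between two top‑orbits is eventually a genuine constant — which rests on the fact that, starting from the common entry point $e_\ell$, the traversal time of level $\ell$ does not depend on the orbit.
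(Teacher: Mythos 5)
Your proposal is correct, but it takes a genuinely different route from the paper, which offers almost no argument of its own: for parts a), b) and d) it simply refers to the analogous construction in \cite{TD}, Section 7, and it settles part c) with the one-line remark that every such point has an iterate landing in $I$. Your write-up supplies precisely the details that the paper outsources. For c), the ``gap from the top is tripled on an ordinary step and preserved on a level-raising step'' bookkeeping is a rigorous version of the paper's remark, and it is sound: a never-fixed orbit would have a positive, non-decreasing gap while being forced into levels $m\to\infty$, where staying in the upper third requires the gap to be below $\frac13 h_m\to 0$. For d), your two structural observations (spike tops map to spike tops, all top-orbits eventually pass through the common entry point of each level and hence coincide up to a constant time shift $c$, while one step at level $m$ displaces a point by at most $2\cdot 3^{-m}$ in the max metric) are exactly the substance of the omitted verification, and the bound $2c\cdot 3^{-m(n)}\to 0$ closes it. Your checks for a) and b) (closedness because spike heights tend to zero, local connectedness at base points, matching of the affine pieces at $y=0$ and $y=\frac23 h_n$, and the accumulation argument along $I$) are the standard ones. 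The trade-off is the obvious one: the paper's proof-by-reference is short, yours is self-contained and verifiable on the spot. One point worth flagging: your continuity check at a spike base needs $\psi_{(n,j)}(0)=z_j^{(n)}$, which is indeed the intended normalization; with the paper's literal convention (increasing $\psi_{(n,j)}$ onto $\langle z_{j-1}^{(n)},z_j^{(n)}\rangle$ for even $n$) one would get $\psi_{(n,j)}(0)=z_{j-1}^{(n)}$ and the wrong (stationary) dynamics on even levels, so for even $n$ the orientation must be reversed — your verification silently uses the corrected reading, which is the right thing to do.
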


\begin{proof}
Proofs of  \ref{aa}), \ref{a}) and \ref{ce}) are very similar as in  \cite{TD} section 7,  so we will not repeat them. The proof of \ref{be}) is obvious since for every such $x$ it is not so hard to find $m$ such that $f^m(x) \in I$
\end{proof}

\begin{lem}\label{DC3lem}
The system $(\mathcal D,f)$ has a DC$3$ pair.
\end{lem}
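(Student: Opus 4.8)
The plan is to exhibit a single DC3 pair, formed by one spike tip and the fixed endpoint $(0,0)\in I$. Put $x=\left(\tfrac13,\tfrac13\right)=\left(z_1^{(1)},h_1\right)$, the tip of the left spike of the first level, and $y=(0,0)$; since $y\in I$ and $f|_I=\operatorname{id}$, one has $f^k(y)=(0,0)$ for all $k$. First I would read off the orbit of $x$ directly from the definition of $f$: unwinding the formulas for $\psi_{(n,j)}$ and $\phi_{(n,j)}$ (this is exactly the motion drawn in Figures~\ref{dendriteDC3a}--\ref{dendriteDC3b}) one checks that $f$ carries the tip of each spike to the tip of the ``next'' spike, so the orbit of $x$ runs through the tips of all spikes, organized into consecutive time-blocks. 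Writing $T_n:=\sum_{i=1}^{n}|J_i|=3^n-1$, during the $n$-th block — i.e.\ for $T_{n-1}\le k<T_n$, a block of $|J_n|=2\cdot 3^{n-1}$ steps — the point $f^k(x)$ visits all tips $\left(z_j^{(n)},h_n\right)$, $j\in J_n$, in order of increasing $j$ for odd $n$ and of decreasing $j$ for even $n$. In particular each $f^k(x)$ is a tip $\left(z_j^{(n)},h_n\right)$ with $z_j^{(n)}=j/3^n\ge 1/3^n=h_n$, so in the max metric $d_k:=d(f^k(x),f^k(y))$ is just the horizontal coordinate of $f^k(x)$; hence $(d_k)$ moves monotonically over the set $\{j/3^n:j\in J_n\}$, a subset of density $\tfrac23$ of the grid $\{1/3^n,\dots,(3^n-1)/3^n\}$, climbing from near $0$ to near $1$ over odd blocks and descending back down over even blocks. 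Thus $(d_k)$ is a ``triangle wave'' whose legs have geometrically growing length $2\cdot 3^{n-1}$, and $d_k$ is uniformly small only in the stretches straddling the transition from an even block to the following odd one.

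Next I would compute, for fixed $\delta\in(0,1)$, the two distribution functions via the counting function $C(N):=\#\{0\le k\le N:d_k<\delta\}$, the key input being $\#\{j\in J_n:j/3^n<\delta\}=\delta\,|J_n|+O(1)$ together with $T_n=3^n-1$. Evaluating $C(N)/N$ at $N_m^{+}:=T_{2m}+\lceil\delta\,|J_{2m+1}|\rceil$ — the instant the ascending block $2m+1$ has just exhausted its below-$\delta$ initial segment — gives $C(N_m^{+})=3\delta\cdot 3^{2m}+O(m)$ and $N_m^{+}=(1+2\delta)\,3^{2m}+O(1)$, so $C(N_m^{+})/N_m^{+}\to\frac{3\delta}{1+2\delta}$. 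Evaluating it instead at $N_m^{-}:=T_{2m+2}-\lceil\delta\,|J_{2m+2}|\rceil$ — the instant just before the descending block $2m+2$ dips below $\delta$, at which point $C(N_m^{-})=C(N_m^{+})+O(1)$ because $d_k\ge\delta$ throughout the intervening times — gives $N_m^{-}=(9-6\delta)\,3^{2m}+O(1)$, hence $C(N_m^{-})/N_m^{-}\to\frac{\delta}{3-2\delta}$. Therefore
$$\Phi_{(f,x,y)}(\delta)\le\frac{\delta}{3-2\delta}<\frac{3\delta}{1+2\delta}\le\Phi^{*}_{(f,x,y)}(\delta)\qquad\text{for every }\delta\in(0,1),$$
the strict middle inequality being equivalent to $\delta<1$. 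Since $\operatorname{diam}\mathcal D=1$, this makes $(x,y)$ a DC3 pair (witnessed on the whole interval $(0,1)$), which proves the lemma.

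The first step is routine but fiddly: checking that $f$ sends each tip to a tip and that the orbit of $x$ sweeps the levels with the stated ordering and block sizes is a mechanical reading of the piecewise-linear definition of $f$ (cf.\ Lemma~\ref{spoj+} and Figures~\ref{dendriteDC3a}--\ref{dendriteDC3b}). The main obstacle is the second step — the bookkeeping for $C(N)$: one has to keep track of how the below-$\delta$ counts accumulate across the geometrically growing blocks and confirm that the running average $C(N)/N$ genuinely oscillates between the two distinct limit values above rather than converging. It is exactly the triangle-wave shape of $(d_k)$ with block lengths $2\cdot 3^{n-1}$ — so that any single block always forms a fixed proportion of the total elapsed time — that keeps the $\liminf$ and $\limsup$ apart, and hence yields the DC3 pair.
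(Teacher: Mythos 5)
Your proof is correct and follows essentially the same route as the paper: the same pair type (a first-level spike tip against a fixed endpoint of $I$ — you use $(0,0)$ where the paper uses $(1,0)$, a mirror image), and the same block-by-block counting of close times along the triangle-wave orbit through the spike tips. The only difference is cosmetic: you derive the limit values $\frac{\delta}{3-2\delta}$ and $\frac{3\delta}{1+2\delta}$ for every $\delta\in(0,1)$, giving the strict inequality pointwise, whereas the paper evaluates at $\delta=\frac14,\frac12$ and invokes monotonicity of the distribution functions.
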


\begin{proof}
We will prove that the pair $(x,y)$, $y=\left(\frac13, \frac13\right)$, $ x= ( 1,0) $ is a DC$3$ pair.
For showing that it is a DC$3$ pair we need to find an interval such that  $ \Phi_{(f,x, y)}(\delta)<\Phi^*_{(f,x, y)}(\delta), $ for every $\delta$ in that interval. Since distribution functions are nondecreasing, it is enough to find 2 values  $a, b$ such that $a < b$ and  $\Phi_{(f,x, y)}(b)<\Phi^*_{(f,x, y)}(a)$
.\\
Let's consider $b=1/2$. \\
Since the metric is maximal, the height will be irrelevant, it is enough to count the total number of spikes and the number of spikes on the right side of $\mathcal D$ at some concrete times - for half way through the levels. For $\Phi^*_{(f,x, y)}$ we will need an odd level + half of the next even level (because in even levels points move to the left), whereas for $\Phi_{(f,x, y)}$ the counting will go from the ``left'' and we will ``stop'' just before entering the $b-$neighborhood 
of $x$.\\
$$\Phi^*_{(f,x, y)}\left(0.5\right) = \displaystyle \lim _{k \to \infty} 
\left(
 \dfrac{ \frac12 l_{2k+2}}{\frac12 (l_{2k+2}+l_{2k+1})} \right)=
  \lim _{k \to \infty}
\left(\dfrac{ \frac12 (3^{(2k+2)} -1)}{\frac12 (3^{2k+2} -1 +3^{2k+1}-1)}\right) 
=\dfrac34$$ \\
$$\Phi_{(f,x, y)}\left(0.5\right) = \displaystyle \lim _{k \to \infty} 
\left(\dfrac{  \frac12 l_{2k} }{\frac12 (l_{2k+1}+l_{2k})}\right)=\dfrac14.$$\\
Let  $a=1/4$.
$$\Phi^*_{(f,x, y)}\left(0.25\right) = \displaystyle \lim _{k \to \infty} 
\left(
 \dfrac{ \frac14 l_{2k+2}}{\frac14 l_{2k+2}+ \frac34 l_{2k+1}} \right)=
  \lim _{k \to \infty}
\left(\dfrac{ (3^{(2k+2)} -1)}{ (3^{2k+2} -1) +3(3^{2k+1}-1)}\right) 
=\dfrac12$$
$\left[\Phi_{(f,x, y)}(0.5)<\Phi^*_{(f,x, y)}(0.25) \right]  \Rightarrow  \Phi_{(f,x, y)}(\delta)<\Phi^*_{(f,x, y)}(\delta) $ for every $\delta \in \langle 0.25 ; 0.5 \rangle $ and so the pair $(x,y)$ is DC3.
%
%
%
\end{proof}

\begin{lem}\label{DC2lem}
The system $(\mathcal D,f)$  has no DC$2$-pairs.
\end{lem}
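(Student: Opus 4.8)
The plan is to split the pairs $(x,y)$ of distinct points of $\mathcal D$ into three groups according to Lemma~\ref{spoj+} and to rule out DC$2$ in each. By Lemma~\ref{spoj+}(\ref{be}) every point of $\mathcal D$ is either eventually fixed or lies in the set $E:=\bigcup_{n\in\mathbb N}\bigl(Z^{(n)}\times\{h_n\}\bigr)$ of spike tips; moreover $f$ sends a spike tip to a spike tip and no spike tip is fixed, so the two alternatives are complementary and each is forward invariant, and an eventually fixed point is in fact constant from some moment on and lies in $I$ (because $f|_I=\mathrm{id}$). Throughout, write $\pi_1$ for the projection of $\mathbb R^2$ onto its first coordinate.

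First, if $x,y\in E$, then Lemma~\ref{spoj+}(\ref{ce}) gives $d(f^k(x),f^k(y))\to 0$, so $\Phi_{(f,x,y)}(\delta)=\Phi^*_{(f,x,y)}(\delta)=1$ for every $\delta>0$; in particular there is no $\epsilon$ with $\Phi_{(f,x,y)}(\epsilon)<1$, so $(x,y)$ is not a DC$2$ pair (indeed \eqref{Li2} already fails). Secondly, if both $x$ and $y$ are eventually fixed, say $f^k(x)\equiv p\in I$ and $f^k(y)\equiv q\in I$ for all large $k$, then $d(f^k(x),f^k(y))$ is eventually the constant $d(p,q)$: if $p=q$ this reduces to the previous situation, while if $p\neq q$ then $\Phi^*_{(f,x,y)}(\delta)=0$ for every $0<\delta\le d(p,q)$, so again $(x,y)$ is not DC$2$.

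The remaining and main case is that exactly one of the points, say $x$, is a spike tip while the other, $y$, is eventually equal to a constant $q\in I$. Here I would use the explicit orbit of a spike tip: computing directly from the definition of $f$ (cf.\ the proof of Lemma~\ref{DC3lem}), after finitely many iterations the orbit of $x$ runs through the tips of some level $m$ — visiting each of the $\#J_m=2\cdot3^{m-1}$ points $z_j^{(m)}=j/3^m$, $j\in J_m$, exactly once and monotonically in the first coordinate — and then passes to level $m+1$, and so on, the direction of the sweep alternating with the parity of $m$. Since $d$ is the maximum metric, $d(f^k(x),q)\ge|\pi_1(f^k(x))-\pi_1(q)|$, so for all large $n$
\[
\#\{\,k\le n:\ d(f^k(x),f^k(y))<\delta\,\}\ \le\ \sum_{m\le M(n)}\#\bigl\{\,j\in J_m:\ |j/3^m-\pi_1(q)|<\delta\,\bigr\}\ \le\ \sum_{m\le M(n)}\bigl(2\delta\,3^m+1\bigr),
\]
where $M(n)$ is the highest level reached by the orbit by time $n$; on the other hand $n\ge\tfrac12\,3^{M(n)-1}$ once $n$ is large, since merely traversing the levels strictly below $M(n)$ already takes $3^{M(n)-1}-O(1)$ iterations. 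Dividing, the geometric sum in the numerator is comparable to its last term, so one obtains $\Phi^*_{(f,x,y)}(\delta)\le C\delta$ for an absolute constant $C$, and therefore $\Phi^*_{(f,x,y)}(\delta)<1$ for every sufficiently small $\delta>0$. Since a DC$2$ pair must satisfy $\Phi^*_{(f,x,y)}(\delta)=1$ for all $0<\delta\le\text{diam }\mathcal D=1$, the pair $(x,y)$ is not DC$2$, which finishes the case analysis.

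The step I expect to be the main obstacle is the bookkeeping in this last case: pinning down, for an arbitrary starting tip $(z_j^{(n)},h_n)$ and for both parities of the level, that the orbit really does sweep each level monotonically through exactly the points $z_j^{(m)}$, $j\in J_m$, and hence that the bound $\#\{\,j\in J_m:|j/3^m-\pi_1(q)|<\delta\,\}\le 2\delta\,3^m+1$ holds uniformly in $m$ and $q$. Once this structural fact has been disentangled from the index juggling in the definitions of $\psi_{(n,j)}$ and $\phi_{(n,j)}$, the remaining estimate is a routine geometric-series computation, and the first two cases follow immediately from Lemma~\ref{spoj+}.
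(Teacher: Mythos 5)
Your proof is correct and follows essentially the same route as the paper: reduce via Lemma~\ref{spoj+} to pairs consisting of a fixed point in $I$ and a spike tip (the tip--tip and fixed--fixed cases being immediate), and then bound $\Phi^*$ by counting, level by level, what fraction of the tip's monotone sweep through the $\#J_m$ spikes of level $m$ falls within $\delta$ of the fixed point. The only difference is one of execution: you settle for the crude estimate $\Phi^*_{(f,x,y)}(\delta)\le C\delta$ for small $\delta$, while the paper writes an exact formula for $\Phi^*_{(f,x,y)}(\delta)$ and evaluates it at $\delta=\tfrac14$ to get the bound $\tfrac34<1$; both suffice to exclude DC2 pairs.
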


\begin{proof}
By Lemma \ref{spoj+} it is enough to check  pairs with 1 fixed point and 1 point of type $\left(z_j^{(n)} , \frac{1}{3^n}\right)$ (endpoints), moreover each such  point has a preimage in 
$\left(\frac13 ,\frac{1}{3}\right)$.
So we need to prove that for every $x \in I $ and $y=\left(\frac13 ,\frac{1}{3}\right)$   there is a $ \delta \in (0, 1)$ such that $\Phi^*_{(f,x, y)}(\delta) \neq 1.$ It is also clear that for $\delta \ge 1/2$ there are points $x$ for which   $\Phi_{(f,x, y)}(\delta) =\Phi^*_{(f,x, y)}(\delta) = 1,$ so we will search $\delta \in (0,\frac12).$ \\
\begin{figure}[H]
 \centering
\includegraphics[width=0.85\textwidth]{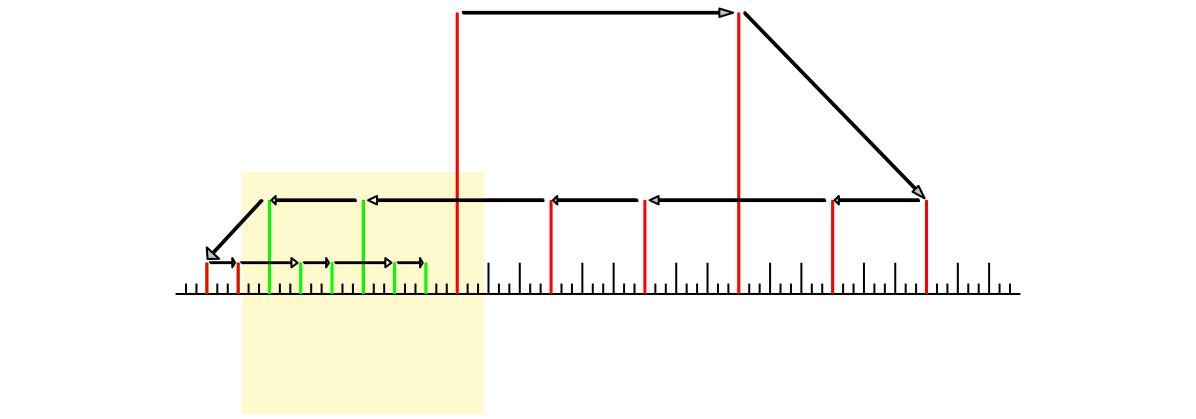}
\caption{ Illustration of a $\delta$-neighborhood for $\Phi^*_{(f,x, y)}(\delta)$.}
\label{dendriteDC3c}
\end{figure}
For $\delta \in \left(0,\frac12\right)$, $ y=\left(\frac13 ,\frac{1}{3}\right)$ and $x= \left(x_1 ,0\right)$, where $x_1 \in \langle 0, 1 \rangle$, there is an exact formula for $\Phi^*_{(f,x, y)}(\delta):$
%
	\begin{equation}\label{fi*}    
		\Phi^*_{(f,x, y)}(\delta) =
		\displaystyle\lim_{k \to \infty} \ \dfrac{a(x,\delta) \cdot l_{2k+1} + c(x,\delta,k) }{b(x,\delta) \cdot l_{2k+1} + \left[1-b(x,\delta)\right]\cdot l_{2k}}
			\end{equation}
			where
			$$
			a(x,\delta) = | \min\{(x_1 + \delta) , 1 \} - \max\{(x_1 - \delta) , 0 \} |,  $$
			
			$$
			-3^m a(x,\delta) -2\le c(x,\delta,k) \le 2 \mbox{\  where \ } m=\dfrac{  \log {\delta^{-1}} }{\log 3}
			$$
			and
			$$
					b(x,\delta) =
					 \left \{   \begin{array}{l  l}
							x_1 + \delta , & \mbox{  if  $\ x_1 \in \langle 0, \frac12 \rangle$, } \\
							1-x_1 + \delta ,  & \mbox{  if   $\ x_1 \in \langle \frac12 , 1 \rangle $.}  \\ 
							\end{array} \right. \\	
	$$
Moreover, if we fix $\delta$, the function $u(x_1)=\Phi^*_{\left(f,(x_1,0), \left(\frac13, \frac13\right)\right)}(\delta)$ has 2 local maxima at $x_{1_1} = \delta,$ $ x_{1_2} = 1- \delta.$
Now, let $\delta=\frac14.$ Then $\Phi^*_{(f,x, y)}(0.25) \le \frac34 < 1 $ for every $(x,y)$ defined as above and so there are no DC2-pairs in $\mathcal D$ . 
\end{proof}

\subsection{DC3 and DC2$\frac12$}

Since the new type of DC (DC2$\frac12$) defined in \cite{RRH} was supposed to fix some  problems of DC3, (e.g. DC2$\frac12 \Rightarrow $ LY) and the above example has also LY-pairs, there was a hope that DC3 might imply at least DC2$\frac12$. Unfortunately, as the next lemma shows, even this weaker hope is not fulfilled on dendrites.
%
%

\begin{lem}\label{DC2.5lem}
The system $(\mathcal D,f)$  has no DC$2\frac12$-pairs.
\end{lem}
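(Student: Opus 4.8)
The strategy is to reuse the machinery developed for Lemma~\ref{DC2lem} and to prove the stronger statement that $\Phi^*_{(f,x,y)}(0)=0$ for every pair $(x,y)$ that is not already excluded on trivial grounds; since a DC$2\frac12$ pair would require $\Phi_{(f,x,y)}(0)<\Phi^*_{(f,x,y)}(0)$, this settles the lemma. First I would split the pairs of distinct points into three cases, exactly as in the proof of Lemma~\ref{DC2lem}. If both points lie in $\bigcup_{n}Z^{(n)}\times\{h_n\}$, then by Lemma~\ref{spoj+} the distance $d(f^k(x),f^k(y))$ tends to $0$, so $\Phi_{(f,x,y)}(\delta)=\Phi^*_{(f,x,y)}(\delta)=1$ for all $\delta>0$, and hence $\Phi_{(f,x,y)}(0)=\Phi^*_{(f,x,y)}(0)=1$. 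If neither point lies in $\bigcup_{n}Z^{(n)}\times\{h_n\}$, then by Lemma~\ref{spoj+} both points are eventually fixed, so their orbits eventually lie in $I$, where $f=\mathrm{id}$; the distance is then eventually constant, and again $\Phi_{(f,x,y)}(0)=\Phi^*_{(f,x,y)}(0)$. Thus the only pairs that could be DC$2\frac12$ consist of one endpoint $e=(z^{(n)}_j,h_n)$ and one eventually fixed point.

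Next I would strip off the finite transients. The distribution functions do not change if we replace a point by one of its images, because this alters the counts in (\ref{LoDF}) and (\ref{UpDF}) by at most an additive constant, which disappears upon dividing by $n\to\infty$; likewise the finitely many initial terms during which the eventually fixed point has not yet reached $I$ are irrelevant. As recorded in the proof of Lemma~\ref{DC2lem}, every spike-top endpoint is a forward iterate of $\left(\tfrac13,\tfrac13\right)$, and the eventually fixed point coincides with some $(x_1,0)\in I$ from some iterate on. Consequently, for any such pair, $\Phi^*_{(f,x,e)}(\delta)=\Phi^*_{(f,(x_1,0),(1/3,1/3))}(\delta)$ for all $\delta>0$, so it suffices to show $\Phi^*_{(f,(x_1,0),(1/3,1/3))}(0)=0$ for every $x_1\in\langle 0,1\rangle$.

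For this I would invoke formula (\ref{fi*}) from the proof of Lemma~\ref{DC2lem}: for $\delta\in\left(0,\tfrac12\right)$,
\[
\Phi^*_{(f,(x_1,0),(1/3,1/3))}(\delta)=\lim_{k\to\infty}\frac{a(x,\delta)\,l_{2k+1}+c(x,\delta,k)}{b(x,\delta)\,l_{2k+1}+\left[1-b(x,\delta)\right]l_{2k}}.
\]
Since $l_n=3^n-1$, we have $l_{2k}/l_{2k+1}\to\tfrac13$, and for a fixed $\delta$ the correction term $c(x,\delta,k)$ stays bounded; dividing numerator and denominator by $l_{2k+1}$ therefore yields $\Phi^*_{(f,(x_1,0),(1/3,1/3))}(\delta)=\dfrac{3\,a(x_1,\delta)}{2\,b(x_1,\delta)+1}$. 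Letting $\delta\to0^+$, the numerator tends to $0$ because $0\le a(x_1,\delta)\le 2\delta$, while the denominator stays $\ge 1$ because $b(x_1,\delta)\ge 0$; hence $\Phi^*_{(f,(x_1,0),(1/3,1/3))}(0)=0$, and so $\Phi_{(f,(x_1,0),(1/3,1/3))}(0)=0$ as well. No such pair is DC$2\frac12$, and combining the three cases shows that $(\mathcal D,f)$ has no DC$2\frac12$ pair.

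The only delicate point should be the reduction carried out in the first two paragraphs — checking that every pair falls into one of the three cases, and that passing to iterates and ignoring the finitely many transient steps is legitimate for the $\liminf$ and $\limsup$ of Cesàro averages. This is, however, entirely parallel to the bookkeeping already done for Lemma~\ref{DC2lem}; once it is in place the computation above is immediate, and in particular the same argument shows that the DC3 pair of Lemma~\ref{DC3lem} is not DC$2\frac12$.
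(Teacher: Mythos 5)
Your proposal is correct and follows essentially the same route as the paper: reduce (as in Lemma~\ref{DC2lem}) to pairs with $x\in I$, $y=\left(\tfrac13,\tfrac13\right)$, and then use formula (\ref{fi*}) to show $\lim_{\delta\to0^+}\Phi^*_{(f,x,y)}(\delta)=0$, which forces $\Phi^*_{(f,x,y)}(0)=\Phi_{(f,x,y)}(0)$. The only cosmetic differences are that you spell out the trivial cases (two endpoints, two eventually fixed points) and replace the paper's case-by-case evaluation of the limit in $x_1$ by the uniform bound $a(x_1,\delta)\le 2\delta$, $b(x_1,\delta)\ge 0$.
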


\begin{proof}
The system  $(\mathcal D,f)$  has no DC$2\frac12$-pairs if $\Phi^*_{(f,x, y)}(0)=\Phi_{(f,x, y)}(0)$ for every pair $x,y \in \mathcal D.$ As in Lemma \ref{DC2lem}, it is enough to check pairs such that $x\in I$ and $y=\left(\frac13, \frac13\right)$. 
Since both distribution functions are nonnegative and nondecreasing, it is sufficient to show that
 for such a pair $(x,y)$, $\displaystyle\lim_{\delta \to 0^+} \Phi^*_{(f,x, y)}(\delta) = 0$.
  
 Let  $x\in I$ and $y=\left(\frac13, \frac13\right)$. $\Phi^*_{(f,x, y)}$ is defined by (\ref{fi*}) in Lemma \ref{DC2lem}, then 
	\begin{equation*} 
		 \displaystyle\lim_{\delta \to 0^+} \Phi^*_{(f,x, y)}(\delta) =  
		  \displaystyle\lim_{\delta \to 0^+} \dfrac{3 a(x,\delta)}{1+2 b(x,\delta) } =
		  \end{equation*}
		  \begin{equation*}
		= \left \{   \begin{array}{l  l}							 \displaystyle\lim_{\delta \to 0^+} \dfrac {3(x_1 + \delta)}{1+2 (x+\delta)} = \displaystyle\lim_{\delta \to 0^+} \dfrac {3\delta}{1+2 \delta}   , & \mbox{  if  $\ x_1 = 0 $ } \\[1.5em]
		 
							\displaystyle\lim_{\delta \to 0^+} \dfrac {6\delta}{1+2 b(x,\delta)} ,  & \mbox{  if   $\ x_1 \in ( 0 , 1 ) $}  \\ [1.5em]		
												
							\displaystyle\lim_{\delta \to 0^+} \dfrac {3(1-x_1 + \delta)}{3-2x+2\delta}=\displaystyle\lim_{\delta \to 0^+} \dfrac {3\delta}{1+2\delta}  ,  & \mbox{  if   $\ x_1 =1 $}  \\ 							
							\end{array} \right\} =0. 
	\end{equation*}
	
 Then by the previous  discussion and lemmas,  
  $\displaystyle\Phi^*_{(f,x, y)}(0) =\Phi_{(f,x, y)}(0) $ for every $ x,y \in \mathcal D$ and so there is no DC2$\frac12$ pair.
\end{proof}

By combining the results of Lemmas \ref{DC3lem}, \ref{DC2lem} and \ref{DC2.5lem} we obtain the following theorem:

\begin{thm}\label{ThDC3}
DC3 implies neither DC2 nor  DC2$\frac12$ on dendrites.
\end{thm}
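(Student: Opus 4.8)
The plan is to let the single system $(\mathcal D,f)$ constructed in Section~\ref{subsec:DC3} serve simultaneously as the witness for both non-implications, and to read off the theorem from Lemmas~\ref{DC3lem}, \ref{DC2lem} and~\ref{DC2.5lem}. First I would record the structural reduction supplied by Lemma~\ref{spoj+}: part~\ref{aa}) guarantees $\mathcal D$ is genuinely a dendrite and part~\ref{a}) that $f$ is continuous, so $(\mathcal D,f)$ is a legitimate dynamical system on a dendrite; parts~\ref{be}) and~\ref{ce}) then say that every point off $\bigcup_n\bigl(Z^{(n)}\times\{h_n\}\bigr)$ is eventually fixed and that any two endpoints approach each other, which collapses every distributional-scrambling question for $f$ to pairs $(x,y)$ with $x\in I$ a fixed point and $y$ an endpoint; and since every endpoint has a preimage at $(1/3,1/3)$, in fact to the one-parameter family $x=(x_1,0)$, $x_1\in\langle 0,1\rangle$, $y=(1/3,1/3)$. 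This reduction is the common backbone of all three lemmas.

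Next the three lemmas do the real work. Lemma~\ref{DC3lem} exhibits an explicit DC$3$ pair $x=(1,0)$, $y=(1/3,1/3)$: since $d$ is the max metric, spike heights are irrelevant, so $d(f^k x,f^k y)$ is controlled by the horizontal position of the image of $y$ as it sweeps through the successive levels of spikes, and the $3$-adic bookkeeping of the level sizes $l_n=3^n-1$ yields $\Phi^*_{(f,x,y)}(1/2)=3/4$, $\Phi_{(f,x,y)}(1/2)=1/4$ and $\Phi^*_{(f,x,y)}(1/4)=1/2$; monotonicity of the distribution functions then promotes $\Phi_{(f,x,y)}(1/2)<\Phi^*_{(f,x,y)}(1/4)$ to $\Phi_{(f,x,y)}(\delta)<\Phi^*_{(f,x,y)}(\delta)$ for all $\delta\in\langle 1/4,1/2\rangle$, which is exactly the DC$3$ condition. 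Lemma~\ref{DC2lem} rules out DC$2$ pairs: it derives the exact asymptotic formula~(\ref{fi*}) for $\Phi^*_{(f,(x_1,0),(1/3,1/3))}(\delta)$ on $\delta\in(0,1/2)$, locates its maxima in $x_1$ at $x_1=\delta$ and $x_1=1-\delta$, and specializes at $\delta=1/4$ to get $\Phi^*_{(f,x,y)}(1/4)\le 3/4<1$ for \emph{every} admissible pair, so the requirement $\Phi^*\equiv 1$ imposed by DC$2$ can never hold. Lemma~\ref{DC2.5lem} rules out DC$2\frac12$ pairs by feeding the same formula~(\ref{fi*}) into the limit $\delta\to 0^+$: the numerator is asymptotic to $3a(x,\delta)$ with $a(x,\delta)\le 2\delta\to 0$, while the denominator stays bounded away from $0$, so $\Phi^*_{(f,x,y)}(0)=\lim_{\delta\to0^+}\Phi^*_{(f,x,y)}(\delta)=0=\Phi_{(f,x,y)}(0)$, and the inequality $\Phi(0)<\Phi^*(0)$ defining DC$2\frac12$ fails.

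Putting the three lemmas together finishes the proof: $(\mathcal D,f)$ possesses a DC$3$ pair, hence is DC$3$ (indeed DC$3_2$), yet has no DC$2$ pair and no DC$2\frac12$ pair, hence is neither DC$2$ nor DC$2\frac12$; as $\mathcal D$ is a dendrite, this is exactly the claimed pair of non-implications. I expect the only genuinely delicate point — already isolated inside Lemma~\ref{DC2lem} — to be the verification of formula~(\ref{fi*}): bounding the error term $c(x,\delta,k)$ uniformly so that it disappears in the $k\to\infty$ limit (note $3^m a(x,\delta)\le 2$ because $3^m=\delta^{-1}$ and $a\le 2\delta$, so $c$ stays bounded while $l_{2k+1}\to\infty$), and confirming that the two local maxima of $u(x_1)=\Phi^*_{(f,(x_1,0),(1/3,1/3))}(\delta)$ occur at $x_1=\delta$ and $x_1=1-\delta$. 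Once that formula is in hand, both the value $\Phi^*\le 3/4$ at $\delta=1/4$ and the limit $\Phi^*\to 0$ as $\delta\to 0^+$ are immediate specializations, and everything else reduces to the self-similar comb geometry, in which the relevant spike density oscillates between $1/4$ and $3/4$ from one level to the next.
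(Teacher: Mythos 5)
Your proposal is correct and follows essentially the same route as the paper: the theorem is obtained directly by combining Lemmas~\ref{DC3lem}, \ref{DC2lem} and \ref{DC2.5lem}, with the single comb dendrite $(\mathcal D,f)$ serving as the common counterexample. Your added remarks on the reduction via Lemma~\ref{spoj+} and on formula~(\ref{fi*}) accurately reflect how those lemmas are proved in the paper.
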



\subsection{DC-pairs and uncountable DC-sets}\label{subsec:DCpairs}
It is easy to see that the  example in the above subsection also shows that existence of a DC3-pair implies  neither existence of an uncountable DC3-set nor an infinite DC3-set (as a simple corollary of Lemma \ref{spoj+} we get that the system does not have any DC3-triples). \\
However the whole construction above was  inspired by the example constructed in  \cite{TD} proving the next theorem and as a simple corollary of that theorem,  and the fact that every DC1-pair is also a LY-pair, we obtain a stronger result:
\begin{thm}[
\cite{TD} Th 7.6] \label{TD76}
There exists a continuous self-map $f$ of a dendrite such that:
$f$ has a DC1-pair but does not have any infinite LY-scrambled set.
\end{thm}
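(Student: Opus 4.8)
The plan is to exhibit a comb-style dendrite $\mathcal E$ together with a continuous self-map $f$ built in the spirit of the construction of $(\mathcal D,f)$ above, but with the spike positions and heights tuned so that one designated pair behaves even more extremely than the DC3-pair of Lemma~\ref{DC3lem}. Concretely, I would keep the features that $f$ is the identity on the base interval $I$ (so every point of $I$ is fixed), that every point which is not a top-of-spike endpoint is eventually mapped into $I$ (hence eventually fixed, exactly as in Lemma~\ref{spoj+}), and that $f$ permutes the endpoint set by ``sweeping'' the orbit of a top-of-spike point $y$ monotonically across successively finer levels. The two things to arrange are: first, that after each rightward sweep the orbit of $y$ spends an asymptotically density-one proportion of its life — along a suitable subsequence of times — near the right endpoint $x=(1,0)$; and second, that along a complementary subsequence of times it spends a density-one proportion of its life far from $x$. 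This is achieved by letting the number and/or the relative spacing of the spikes introduced at each level grow fast enough that the counting ratios analogous to those in the proof of Lemma~\ref{DC3lem} tend to $1$ and to $0$ on the respective subsequences, rather than to $3/4$ and $1/4$.

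Granting such a construction, the DC1 part is a counting computation. Because $d$ is the max metric, $d(f^k(x),f^k(y))$ is controlled by the horizontal displacement between $1$ and the foot of the spike currently carrying $f^k(y)$ (the height being irrelevant once the foot is close), so for every $\delta>0$ the event ``$d(f^k(x),f^k(y))<\delta$'' is, up to finitely many exceptional levels, the event that $f^k(y)$ sits on a spike whose foot lies in $(1-\delta,1]$. By the design above this event has upper density $1$, whence $\Phi^*_{(f,x,y)}(\delta)=1$ for every $0<\delta\le\text{diam }\mathcal E$; and the complementary subsequence forces $\Phi_{(f,x,y)}(\epsilon)=0$ for a fixed $\epsilon$ (say $\epsilon=\tfrac12$). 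Hence $(x,y)$ is a DC1-pair, and in particular, as noted in the text, an LY-pair.

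The heart of the matter — and the step I expect to be the main obstacle — is to show that $f$ has no infinite LY-scrambled set. As in Lemma~\ref{spoj+}, the only candidates for scrambled pairs are pairs of two top-of-spike endpoints, or such an endpoint together with a (fixed) point of $I$, and the forward orbit of a top-of-spike endpoint is rigidly prescribed by the construction. I would first prove a rigidity lemma: if $z,z'$ are top-of-spike endpoints whose orbits are not eventually equal, then either $\lim_{k\to\infty} d(f^k(z),f^k(z'))=0$ (the two sweeps lock onto the same branch and stay asymptotically together) or $\liminf_{k\to\infty} d(f^k(z),f^k(z'))>0$ (the sweeps are permanently separated by a definite horizontal gap); either alternative kills one of the two Li-Yorke conditions, and two endpoints with eventually equal orbits obviously fail \eqref{Li2}. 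A symmetric, more bookkeeping-heavy analysis handles a pair with one point $x_0\in I$: here $(x_0,z)$ can be scrambled only if $x_0\in\omega_f(z)$ and $\omega_f(z)\not\subset\{x_0\}$, and describing $\omega_f(z)$ from the sweeping dynamics shows this forces $x_0$ to be the unique endpoint of $I$ toward which the sweep accumulates, so that $(x_0,z)$ and $(x_0',z')$ with $x_0\neq x_0'$ cannot both be scrambled while $(z,z')$ is. Putting these pieces together, every LY-scrambled set has at most two points — in any case is finite — so $\mathcal E$ carries no infinite LY-scrambled set. The delicate point throughout is the rigidity lemma: it rests on tracking, level by level, exactly which spike each iterate lands on, and it is here that the bulk of the work lies and that the precise growth rates chosen in the construction are used.
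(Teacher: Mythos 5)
The paper does not prove this statement at all --- it is imported verbatim from \cite{TD} (Theorem 7.6) --- but the Note in Subsection \ref{subsec:DCpairs} makes clear that the intended construction is exactly the kind of modification of $(\mathcal D,f)$ you describe: a comb dendrite whose successive levels of spikes are sized and placed so that the sweeping orbit lingers. So your strategy is the right one; the problem is that the two places where the work actually lives are the two places you skip. First, the construction is never pinned down, and the one binding constraint --- continuity of $f$ --- is not mentioned. Infinitely many spike feet accumulate at points of $I$ where $f$ is the identity, so the image of the lower part of a level-$n$ spike (the base interval from its foot to the foot of the next spike in the orbit order) must have length tending to $0$ uniformly as $n\to\infty$. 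Consequently the orbit cannot jump from a cluster near $0$ to a cluster near $1$ in one step; it is forced to transit through on the order of $1/\epsilon_n$ intermediate spikes. Your density-one claims therefore require cluster sizes that dominate not only all previous levels but also these forced transit segments, and the near-$1$ clusters must shrink into $(1-\delta_n,1]$ with $\delta_n\to 0$ so that $\Phi^*_{(f,x,y)}(\delta)=1$ holds for \emph{every} $\delta$. All of this is arrangeable, but it is precisely the quantitative content of the theorem and cannot be left at ``granting such a construction.''

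Second, the no-infinite-LY-set half is both simpler than you make it and slightly misstated. In any construction of this type the spike tops all lie on a single forward orbit (the top of each spike maps to the top of the next), and consecutive points of that orbit become uniformly close because within-level gaps and heights tend to $0$; hence for any two endpoints $\lim_{k\to\infty} d(f^k(z),f^k(z'))=0$. Only the first alternative of your ``rigidity dichotomy'' ever occurs, and \eqref{Li2} fails for every endpoint pair. Combined with the observation that two eventually fixed points are never Li--Yorke, a scrambled set contains at most one endpoint and at most one eventually fixed point, hence at most two elements --- that is the entire argument, and it is the easy part, not the delicate one. Your further claim that a scrambled pair $(x_0,z)$ forces $x_0$ to be ``the unique point toward which the sweep accumulates'' is wrong as stated: $\omega_f(z)$ is a nondegenerate subset of $I$ (all of $I$ for the paper's $\mathcal D$), so \emph{every} $x_0\in I$ forms a Li--Yorke pair with every spike top. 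This does no damage, since pairs of points of $I$ already fail \eqref{Li1}, but it shows the bookkeeping you anticipate there is not where the difficulty lies.
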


\begin{cor}\label{DCpary}
Existence of a DC$i$ pair does not imply existence of an infinite DC$i$-set for any known $i-$type of distributional chaos $\left ( i \in \{1, 1.5, 2, 2.5, 3\}\right)$.
\end{cor}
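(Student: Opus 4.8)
The plan is to derive Corollary \ref{DCpary} from Theorem \ref{TD76} together with the trivial inclusions among the scrambling notions. First I would recall that, by the very definitions in Section \ref{sec:term}, every DC1-pair is a DC$i$-pair for each $i\in\{1,1.5,2,2.5,3\}$ (a DC1-pair satisfies $\Phi^*=1$ everywhere and $\Phi(\epsilon)=0$ for some $\epsilon$, which immediately gives $\Phi(\epsilon)<1$, gives $\Phi(\delta)<\Phi^*(\delta)$ on an interval, and gives $\Phi(0)=0<1=\Phi^*(0)$). Likewise, since a DC1-pair also satisfies \eqref{Li1} and \eqref{Li2}, it is a Li-Yorke pair, and any DC$i$-set is in particular an LY-scrambled set of the same cardinality.

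Next I would apply Theorem \ref{TD76}: there is a dendrite map $f$ possessing a DC1-pair but no infinite LY-scrambled set. Fix this system. By the first observation it has a DC$i$-pair for every $i$ in the list, so the hypothesis ``existence of a DC$i$ pair'' is met. On the other hand, if $(X,f)$ admitted an infinite DC$i$-set $S$ for some such $i$, then $S$ would be an infinite LY-scrambled set, contradicting Theorem \ref{TD76}. Hence no infinite DC$i$-set exists, which is exactly the claim. I would phrase this uniformly so that a single sentence handles all five values of $i$.

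The only delicate point — and it is minor — is checking the inclusion of DC1-pairs into DC$i$-pairs for the two ``hybrid'' types DC1$\frac12$ and DC2$\frac12$, since DC1$\frac12$ is not discussed in the paper; I would simply note that the paper restricts attention to the cited definitions and that, in every standard formulation, a DC1-pair is a fortiori of each weaker type, citing \cite{RRH} and \cite{Down} if needed. No genuine obstacle arises: the corollary is a direct packaging of Theorem \ref{TD76} via the elementary implication chain DC1 $\Rightarrow$ DC$i$ $\Rightarrow$ LY at the level of pairs and of scrambled sets.
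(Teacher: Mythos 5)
Your argument is fine, and essentially the paper's own route, for $i \in \{1, 1.5, 2, 2.5\}$: the system of Theorem~\ref{TD76} has a DC1-pair, hence a DC$i$-pair for each of these $i$, and since DC$i$-pairs of these types are Li--Yorke pairs, an infinite DC$i$-set would be an infinite LY-scrambled set, contradicting the theorem. But it breaks down at $i=3$, and that is a genuine gap. The step ``any DC$i$-set is in particular an LY-scrambled set of the same cardinality'' is false for DC3: a DC3-pair only requires $\Phi_{(f,x,y)}(\delta)<\Phi^*_{(f,x,y)}(\delta)$ for $\delta$ in some interval $(a,b)$ with possibly $a>0$, which does not force $\liminf_{k}d(f^k(x),f^k(y))=0$. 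DC3 does not imply Li--Yorke in general (this is precisely the defect of DC3 that motivated DC$2\frac12$ in \cite{RRH}), and the paper itself lists ``Does DC3 imply at least LY on dendrites?'' among its open questions, so your inclusion would implicitly assume an open problem. Consequently, for $i=3$ an infinite DC3-set in the system of Theorem~\ref{TD76} would not contradict the absence of infinite LY-scrambled sets, and that theorem alone does not deliver the DC3 case.

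The paper closes the DC3 case by a different witness: the comb dendrite system $(\mathcal D,f)$ of subsection~\ref{subsec:DC3}. It has a DC3-pair by Lemma~\ref{DC3lem}, while Lemma~\ref{spoj+} implies it has no DC3-triples at all: every point off the set of endpoints is eventually fixed and every pair of endpoints is asymptotic, so among any three points two have distance sequence that is either eventually constant or converging to $0$, and such a pair is not DC3; in particular there is no infinite (or even three-element) DC3-set. If you add this argument (or any route not passing through LY) for $i=3$, and keep your Theorem~\ref{TD76} argument for $i\in\{1,1.5,2,2.5\}$, the corollary follows as stated.
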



\begin{note}
Notice, that the above construction of the dendrite $\mathcal D$ can be changed to have DC2 or DC1-pairs (one example for the DC1 case can be seen in \cite{TD}). All that we need to change is how many spikes we will add in each level between each 2 ``old'' spikes.
If we use any bounded number of ``new spikes'' between every 2 ``old spikes'', we will get DC3, if the number of added spikes  between every 2 ``old spikes'' will grow without bound we will get at least DC2.
But for this construction, there is a clear jump between DC3 and DC2, so that leaves us with the question whether it can be that DC2$\frac12$ implies DC2. 
\end{note}



\subsection{Uncountable DC3-set}\label{subsec:DC3un}
In subsection \ref{subsec:DC1} we just simply used known results on the 2-shift. One would think we can do the same for the case DC3 and DC2. Unfortunately even though many tried to find such a 2-shift,  to our knowledge there is no such  result. We tried too, but even though we were able to construct an infinite DC3-set
, it was not bigger than countable.
Fortunately, just when we were ready to give up, we found out that there is an example  of a subshift of the full 5-shift in \cite{WLF}, which has the required proprerties. And so we just need to construct a Gehman-like dendrite for the full 5-shift and its subshifts. We will denote this dendrite $\mathcal G_{5}$ and the corresponding map $g_{5}$.\\
We will not prove Lemma \ref{g5}, since the proof would be just a small modification of previous proofs for the classic Gehman dendrite. But let us  briefly recall how the map $g_5$ works:\\
We denote the branching points as follows: $c$ is the top point, then from left to right in the next level:
$c_{0}, c_{1}, c_{2}, c_{3}, c_{4}$, in the next level down : $c_{00}, c_{01}, c_{02}, c_{03}, c_{04}, c_{10},\dots , c_{44},$ and so on.
Similarly for the branches we start with $B_{0}=[c,c_0],\dots,B_{4}=[c,c_4]$,  and for every $n \in \mathbb N$ the branch $B_{i_1...i_{n+1}}=[c_{i_1...i_n},c_{i_1...i_{n+1}}]$, where each $i_j \in \{0, 1, 2, 3, 4\}$.
The map $g_5$ is defined such that:
$c$ is a fixed point, 
$B_i \rightarrow c$ and every  $B_{i_1...i_{n}} \rightarrow B_{i_2...i_{n}}$, in such a way that $c_{i_1...i_{n}} \rightarrow c_{i_2...i_{n}}$, and $c_i \rightarrow c$, where all $i$'s  are from the set $ \{0, 1, 2, 3, 4\}.$  The map on the limit set is the full 5-shift.

\begin{figure}[h]
 \centering
 \includegraphics[width=14.2cm]{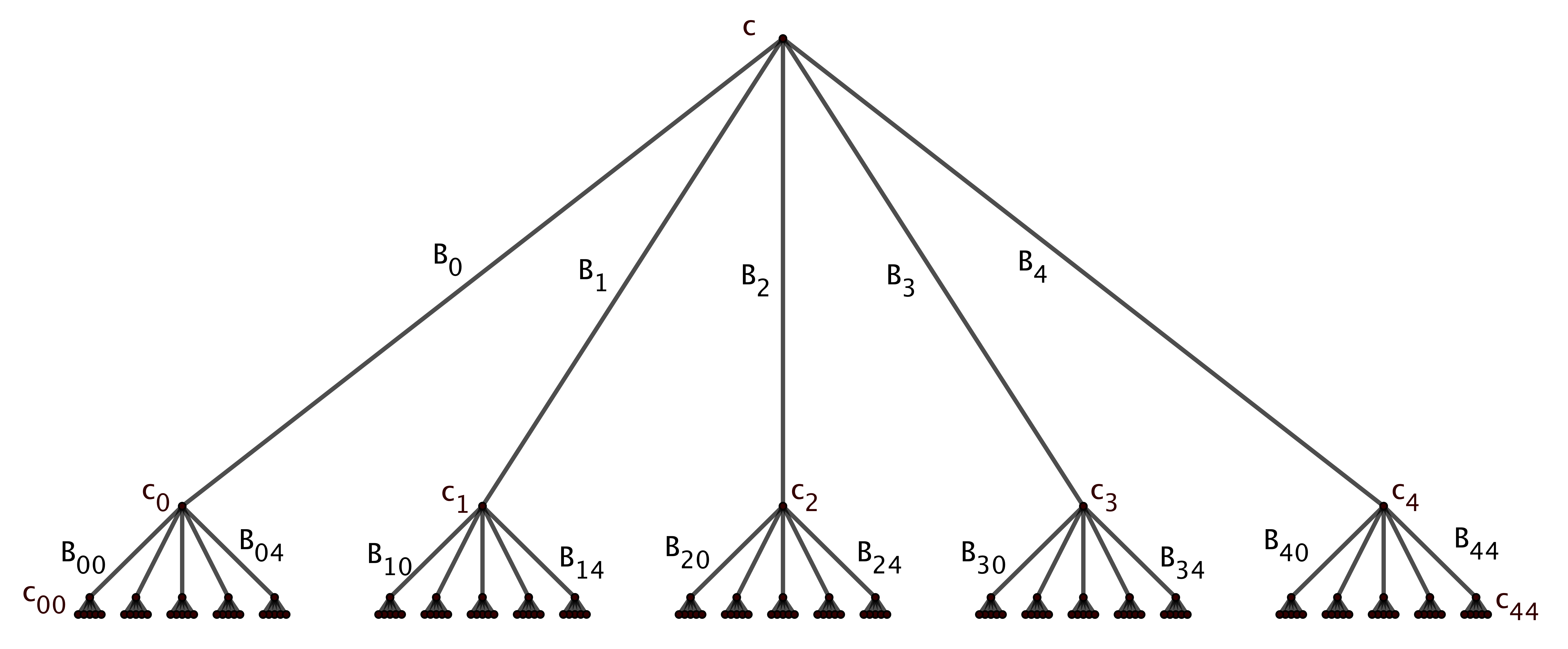}  
  \caption{Gehman style dendrite $\mathcal G_{5}$}
   \end{figure}

\begin{lem}\label{g5}
If $Y \subset  \{0, 1, 2, 3, 4\}^{\mathbb N_0}$
is a subshift then 
there is a subdendrite $\mathcal G_{5_Y}$ of the  dendrite $\mathcal G_5$ invariant under $g_5$.
Let $ \mathcal E_{5_Y}$ be the set of end points of $\mathcal G_{5_Y}$, then $( \mathcal E_{5_Y}, g |_{\mathcal E_{5_Y}} )$ is topologically conjugate to $(Y, \sigma)$ and all DC-pairs of  $\mathcal G_{5_Y}$ are contained in  $\mathcal E_{5_Y}$.
\end{lem}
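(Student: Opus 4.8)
The plan is to transfer, with only arithmetic changes, the construction that embeds a subshift of the full $2$-shift into the Gehman dendrite. For a finite word $w=i_1\dots i_n$ over the alphabet $\{0,1,2,3,4\}$ put $c_{\emptyset}=c$ and let $B_w=[c_{i_1\dots i_{n-1}},c_{i_1\dots i_n}]$ be the associated branch; call $w$ \emph{$Y$-admissible} if it is the length-$n$ initial block of some point of $Y$. I would then define
$$\mathcal G_{5_Y}=\overline{\bigcup\{\,B_w : w \text{ is $Y$-admissible}\,\}},$$
the closure being taken in $\mathcal G_5$. An initial block of a $Y$-admissible word is again $Y$-admissible, so the union of admissible branches is a ``subtree'' of $\mathcal G_5$ that is connected and contains $c$; its closure is therefore a subcontinuum of the dendrite $\mathcal G_5$ and hence itself a dendrite, so $\mathcal G_{5_Y}$ is a subdendrite. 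Because the level-$n$ branches have diameters tending to $0$, the only points added by the closure are the ``ends'': for $y=i_1i_2\dots\in Y$ the nested branch points $c_{i_1\dots i_n}$ converge to a point $e_y$, and using that $Y$ is closed one gets $\overline{\bigcup B_w}=\bigl(\bigcup B_w\bigr)\cup\{\,e_y : y\in Y\,\}$.

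Then I would check the three assertions in turn. For invariance, the description of $g_5$ gives $g_5(B_w)=B_{w'}$ for $|w|\ge 2$ (where $w'$ is $w$ with its first letter deleted) and $g_5(B_i)=\{c\}$; since $Y$ is shift-invariant, $Y$-admissibility of $w$ implies that of $w'$, so $g_5$ maps admissible branches to admissible branches or to $\{c\}$, and as $g_5$ is a continuous self-map of the compact space $\mathcal G_5$ it is closed, whence $g_5(\mathcal G_{5_Y})=\overline{g_5(\bigcup B_w)}\subseteq\mathcal G_{5_Y}$. For the endpoints: no branch point $c_w$ is an endpoint of $\mathcal G_{5_Y}$, because an admissible $w$ extends to an admissible $wi$ (a point of $Y$ has arbitrarily long initial blocks), so an arc of $\mathcal G_{5_Y}$ descends from $c_w$; conversely each $e_y$ is an endpoint, since for any neighbourhood $U$ of $e_y$ and $n$ large the component of $\mathcal G_{5_Y}\setminus\{c_{i_1\dots i_n}\}$ that contains $e_y$ is an open subset of $U$ whose boundary is the single point $c_{i_1\dots i_n}$; hence $\mathcal E_{5_Y}=\{e_y : y\in Y\}$. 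For the conjugacy, the coding map $\pi\colon\mathcal E_{5_Y}\to Y$, $e_y\mapsto y$, is a bijection, and it is a homeomorphism because the level-$n$ branches shrink, so two endpoints are metrically close exactly when their codes agree on a long initial block --- this is precisely the product topology on $Y$, and in particular $\mathcal E_{5_Y}$ is closed, being homeomorphic to the compact set $Y$; finally $g_5(e_y)=\lim_n g_5(c_{i_1\dots i_n})=\lim_n c_{i_2\dots i_n}=e_{\sigma y}$ by continuity of $g_5$, i.e. $\pi\circ g_5=\sigma\circ\pi$, so $(\mathcal E_{5_Y},g_5|_{\mathcal E_{5_Y}})$ is topologically conjugate to $(Y,\sigma)$.

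For the final clause --- that all DC-pairs lie in $\mathcal E_{5_Y}$ --- I would reuse the argument from the Gehman case: if $x\in\mathcal G_{5_Y}\setminus\mathcal E_{5_Y}$ then $x\in B_w$ for some admissible $w$ with $|w|=n$, so $g_5^{\,n}(x)=c$ and $x$ is eventually fixed. Hence for any pair $(x,y)$ with $x$ a non-endpoint one has $d(g_5^{\,k}(x),g_5^{\,k}(y))=d(c,g_5^{\,k}(y))$ for all large $k$, which is $0$ for large $k$ when $y$ too is a non-endpoint and equals the constant $L:=d(c,e)>0$ --- the same for every endpoint $e$, by the self-similar construction of $\mathcal G_5$ --- when $y$ is an endpoint. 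In either case the orbit-distance is eventually constant, so $\Phi_{(f,x,y)}$ and $\Phi^*_{(f,x,y)}$ coincide and $(x,y)$ cannot be a DC-pair of any type; thus every DC-pair consists of two endpoints. All of these steps are routine adaptations of the classical Gehman-dendrite arguments (as the author remarks), so I do not anticipate a genuine obstacle; the points that require the most care are verifying that $\mathcal G_{5_Y}$ is connected with endpoint set exactly $\{e_y:y\in Y\}$ carrying the product topology --- that is, that $\pi$ is a homeomorphism --- and making the step ``$d(c,e)$ is independent of the endpoint $e$'' precise from the particular metric placed on $\mathcal G_5$.
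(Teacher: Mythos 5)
The paper gives no proof of this lemma at all---it explicitly defers to ``a small modification of previous proofs for the classic Gehman dendrite''---and your proposal carries out exactly that modification: the admissible-branch subdendrite, the coding-map conjugacy on the endpoint set, and the eventually-fixed argument for non-endpoints, so it is correct and essentially the paper's intended approach. The only step needing the care you yourself flag is excluding DC3-pairs that involve a non-endpoint, which requires a metric on $\mathcal G_5$ in which all endpoints are equidistant from $c$ (e.g.\ a length metric with equal branch lengths at each level), since DC3 is metric-sensitive---a detail the paper likewise leaves implicit.
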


\begin{thm}[\cite{WLF} sec. 5]\label{WLF}
There exists a system  $(Y,\sigma)$, such that $Y \subset   \{0, 1, 2, 3, 4\}^{\mathbb N_0}$ and has an uncountable DC3-set, but has no DC2-pair.
\end{thm}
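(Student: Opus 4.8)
Theorem~\ref{WLF} is a purely symbolic statement, so the plan is to build the subshift $Y$ of the full $5$-shift directly (feeding it afterwards into Lemma~\ref{g5} to get the dendrite is a separate matter and not part of this claim). With the usual metric $d(u,v)=2^{-\min\{i:u_i\neq v_i\}}$ one has, for $2^{-(j+1)}<\delta\le 2^{-j}$, that $\Phi^{*}_{(\sigma,x,y)}(\delta)$ and $\Phi_{(\sigma,x,y)}(\delta)$ are the upper and the lower density of $\{k\ge 0:\ x_{[k,k+j]}=y_{[k,k+j]}\}$. So it suffices to arrange two things: (a) an uncountable set $S\subseteq Y$ on which every pair has, at some range of scales $j$, strictly smaller lower than upper window-agreement density; and (b) for \emph{every} pair $u\neq v$ of points of $Y$, a positive lower density of coordinates at which $u$ and $v$ differ. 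Since a DC2-pair requires $\Phi^{*}\equiv 1$, condition (b) excludes all DC2-pairs at once; condition (a), by contrast, only asks that the lower density be strictly below the upper one, with no need for the latter to reach $1$, so the two are compatible.

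For the construction I would split the alphabet into \emph{common symbols} $\{0,1\}$, \emph{code symbols} $\{2,3\}$, and a lone \emph{separator} $4$. Fix a constant-rate error-correcting code $E$ over the binary alphabet $\{2,3\}$ whose codewords have relative Hamming distance bounded below by a fixed $\rho_{0}>0$, and fast-growing lengths $\ell_{1}\ll\ell_{2}\ll\cdots$ (say $\ell_{n}\ge 2^{\ell_{n-1}}$). For $\xi\in\{0,1\}^{\mathbb N}$ put
$$y^{(\xi)}\ =\ B_{1}(\xi)\,B_{2}(\xi)\,B_{3}(\xi)\cdots,\qquad B_{n}(\xi)\ =\ 4\ \,C_{n}(\xi|_{n})\ \,0^{\ell_{n}},$$
where $C_{n}(\xi|_{n})$ is the length-$\ell_{n}$ word obtained by repeating $E(\xi|_{n})$ (repeating, rather than using one long codeword, is what makes disagreements between $C_{n}(\xi|_{n})$ and $C_{n}(\eta|_{n})$ accumulate at rate $\ge\rho_{0}/2$ over \emph{every} long sub-window, not just over the whole block). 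Set $S=\{y^{(\xi)}:\xi\in\{0,1\}^{\mathbb N}\}$ and $Y=\overline{\{\sigma^{k}y^{(\xi)}:\xi\in\{0,1\}^{\mathbb N},\ k\ge 0\}}$, a subshift of the full $5$-shift.

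For (a): $S$ is uncountable with pairwise distinct points. Fix $\xi\neq\eta$ first differing at coordinate $N$. For $n\ge N$ the words $C_{n}(\xi|_{n}),C_{n}(\eta|_{n})$ disagree on a proportion $\rho'_{n}\in[\rho_{0},1]$ of their positions (and on a proportion $\ge\rho_{0}/2$ of every long sub-window), while $y^{(\xi)},y^{(\eta)}$ agree on all separators and all common words. Because the $\ell_{n}$ grow so fast, the disagreement density of $(y^{(\xi)},y^{(\eta)})$ over an initial segment $[0,M]$ is governed, up to $o(1)$, by the last (partly) completed code word: it equals $\rho'_{n}$ when $M$ ends a code word $C_{n}$, drops to $\rho'_{n}/2$ when $M$ ends the following common word $0^{\ell_{n}}$, and never falls below $\rho_{0}/4$. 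Hence the upper disagreement density $L$ satisfies $L\ge\rho_{0}$, the lower disagreement density lies in $[\rho_{0}/4,L/2]$, so it is positive and strictly below $L$. At the single-symbol scale ($\delta\in(\tfrac12,1]$) this gives
$$\Phi_{(\sigma,y^{(\xi)},y^{(\eta)})}(\delta)=1-L\ <\ 1-L/2\ \le\ \Phi^{*}_{(\sigma,y^{(\xi)},y^{(\eta)})}(\delta)\ \le\ 1-\rho_{0}/4\ <\ 1,$$
so every pair from $S$ is a DC3-pair (with gap-interval $(\tfrac12,1)$) that is neither DC1 nor DC2; thus $S$ is an uncountable DC3-set.

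For (b) — the main obstacle — one must show that \emph{no} pair of points of $Y$ is a DC2-pair, and this is where the five symbols are genuinely needed. The role of the alphabet split is to make the block structure \emph{locally readable}: every point of $Y$ is a shift of some $y^{(\xi)}$ or a limit of such, and in every case it is a concatenation of long common stretches (over $\{0,1\}$) and long code stretches (over $\{2,3\}$) punctuated by isolated $4$'s, the code stretches having positive lower density and, by the repetition in the $C_{n}$, being internally ``spread out''. One then lists the limit points that can actually arise — essentially the constant common word $0^{\infty}$ and its shifts, infinite concatenations of code words and their shifts, and mixtures — and checks case by case that any two \emph{non-asymptotic} points of $Y$ differ with positive lower density (so $\Phi^{*}<1$ at the single-symbol scale), while asymptotic pairs have $\Phi\equiv 1$ and are not DC2 either. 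The delicate sub-cases are (i) two phase-shifted copies $\sigma^{k}y^{(\xi)},\sigma^{k'}y^{(\xi)}$ with $k\neq k'$, dealt with by also choosing the code words far from their own nontrivial translates, and (ii) ruling out ``hidden'' highly-proximal pairs inside the orbit closure; here the disjointness of the common and code sub-alphabets keeps the closure small (over a two-symbol alphabet the analogous closure contains the full $2$-shift, which \emph{does} have DC2-pairs — precisely why the construction needs more letters). Carrying (i)--(ii) out in full is the technical core of \cite{WLF}, which one may alternatively simply cite.
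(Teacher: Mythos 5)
The paper itself contains no proof of this statement: Theorem~\ref{WLF} is imported verbatim from \cite{WLF}, where the example is produced as a substitution subshift over the five-letter alphabet, and the only ``proof'' in the present paper is that citation. So the route consistent with the paper is the one you mention in your last sentence --- simply cite \cite{WLF} --- in which case your whole construction is superfluous.

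Judged as a standalone proof, your proposal has a genuine gap, and it sits exactly at the hard half of the theorem. The DC3 half is essentially fine: with $\ell_n$ growing superexponentially, the running single-symbol disagreement density of a pair $\bigl(y^{(\xi)},y^{(\eta)}\bigr)$ oscillates between roughly $\rho'_n$ (end of a code block) and $\rho'_n/2$ (end of the following zero block), which gives $\Phi<\Phi^{*}$ on the interval $\left(\tfrac12,1\right)$ and hence an uncountable DC3-set. But the claim that the \emph{whole} orbit closure $Y$ has no DC2-pair is only gestured at: the quantifier runs over all pairs of limit points, and your treatment consists of an unproven classification of those limit points plus two acknowledged ``delicate sub-cases'' that are never carried out. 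Sub-case (i) needs codewords that are Hamming-far from all of their own nontrivial shifts --- an additional combinatorial property of the code $E$ that you assert can be arranged but neither construct nor check against the periodic repetition inside $C_n$; sub-case (ii), pairs of limit points built purely from code symbols (limits of the periodic words $E(\xi|_n)^{\infty}$ along subsequences), is not controlled at all: nothing in your hypotheses on $E$ prevents two such points from agreeing on windows with upper density one while disagreeing with positive upper density, which is exactly a DC2-pair. The assertion ``every non-asymptotic pair of $Y$ differs with positive lower density'' \emph{is} the no-DC2 half of the theorem, and deferring it to ``the technical core of \cite{WLF}'' does not close the gap, because the argument there concerns their substitution system, not the coded block subshift you built, so it does not transfer to your $Y$.
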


As a combination of the previous theorems and lemma we obtain the following result:

\begin{thm}\label{ThDC2}
DC3 does not imply DC2 on dendrites in the sense of uncountable sets or pairs.
\end{thm}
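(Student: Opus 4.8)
The plan is to assemble Lemma~\ref{g5} and Theorem~\ref{WLF} in exactly the way that Theorem~\ref{ThDC1} was obtained from Theorem~\ref{ThDown}, Theorem~\ref{ThPik} and the conjugacy lemma for the Gehman dendrite. First I would invoke Theorem~\ref{WLF} to fix a subshift $(Y,\sigma)$ with $Y\subset\{0,1,2,3,4\}^{\mathbb N_0}$ that carries an uncountable DC3-set but has no DC2-pair. Then I would apply Lemma~\ref{g5} to this $Y$, obtaining the subdendrite $\mathcal G_{5_Y}\subset\mathcal G_5$, invariant under $g_5$, whose end-point set $\mathcal E_{5_Y}$ together with $g_5|_{\mathcal E_{5_Y}}$ is topologically conjugate to $(Y,\sigma)$, and with the property that every DC-pair of $\mathcal G_{5_Y}$ lies in $\mathcal E_{5_Y}$. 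Since $\mathcal G_{5_Y}$, being a subcontinuum of a dendrite, is itself a dendrite and $g_5|_{\mathcal G_{5_Y}}$ is a continuous self-map, $(\mathcal G_{5_Y}, g_5|_{\mathcal G_{5_Y}})$ is a legitimate dynamical system on a dendrite.

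Next I would transfer the two dynamical features across the conjugacy. The only point needing care is that the existence of a DC$i$-pair (and of an uncountable DC$i$-set) is preserved by topological conjugacy: because $\mathcal E_{5_Y}$ and $Y$ are compact, the conjugating homeomorphism and its inverse are uniformly continuous, so the condition $\Phi^*_{(f,x,y)}\equiv 1$ on $(0,\text{diam }X\rangle$ is preserved, and the strict inequality $\Phi_{(f,x,y)}(\delta)<\Phi^*_{(f,x,y)}(\delta)$ on some nondegenerate interval is preserved as well (the interval itself may change). Consequently the uncountable DC3-set of $(Y,\sigma)$ yields an uncountable DC3-set inside $\mathcal E_{5_Y}\subset\mathcal G_{5_Y}$, so $(\mathcal G_{5_Y},g_5)$ is DC3 in the strongest (uncountable) sense.

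It remains to rule out DC2. Suppose $(\mathcal G_{5_Y},g_5)$ had a DC2-pair $(u,v)$. By Lemma~\ref{g5} we would have $u,v\in\mathcal E_{5_Y}$, and pulling this pair back through the conjugacy (again using uniform continuity in the reverse direction) would produce a DC2-pair in $(Y,\sigma)$, contradicting Theorem~\ref{WLF}. Hence $(\mathcal G_{5_Y},g_5)$ has no DC2-pair, and a fortiori no infinite or uncountable DC2-set. Combining the two conclusions gives a dynamical system on a dendrite with an uncountable DC3-scrambled set and no DC2-pair, which is precisely the assertion.

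The argument is essentially routine bookkeeping once Lemma~\ref{g5} and Theorem~\ref{WLF} are granted; the only genuinely non-formal ingredient is the conjugacy-invariance of the DC-notions, and even that is standard for compact systems. The substantive work has been pushed into Lemma~\ref{g5} (constructing the Gehman-style dendrite over the full $5$-shift and checking that the interesting dynamics live on the end-point set) and into the external Theorem~\ref{WLF} (exhibiting the $5$-shift example with an uncountable DC3-set but no DC2-pair).
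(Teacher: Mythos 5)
Your route is the same as the paper's: Theorem~\ref{ThDC2} is obtained there exactly by combining Lemma~\ref{g5} with Theorem~\ref{WLF}, and your backward step is sound --- a DC2-pair of $(\mathcal G_{5_Y},g_5)$ would lie in $\mathcal E_{5_Y}$ by Lemma~\ref{g5} and would pull back to a DC2-pair of $(Y,\sigma)$, since DC1 and DC2 are genuine conjugacy invariants (uniform continuity of the conjugating homeomorphism and its inverse handles both defining conditions), contradicting Theorem~\ref{WLF}.

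The weak point is your forward transfer of the uncountable DC3-set. You claim that the strict inequality $\Phi_{(f,x,y)}(\delta)<\Phi^*_{(f,x,y)}(\delta)$ on a nondegenerate interval is preserved under conjugacy by uniform continuity alone, ``the interval itself may change.'' That is false in general: DC3 is \emph{not} invariant under topological conjugacy, which is one of its well-known defects discussed in \cite{BSS} and \cite{RRH} and part of the motivation for DC$2\frac12$ recalled earlier in this paper. Uniform continuity only gives estimates of the form $\Phi_{(g,h(x),h(y))}(\delta')\le\Phi_{(f,x,y)}(\delta_1)$ and $\Phi^*_{(g,h(x),h(y))}(\delta')\ge\Phi^*_{(f,x,y)}(\delta_2)$ with $\delta_2<\delta_1$, and monotonicity of the distribution functions then works against the strict inequality. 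What actually saves the step here is the specific nature of the conjugacy in Lemma~\ref{g5}: both the shift metric on $Y$ and the dendrite metric restricted to $\mathcal E_{5_Y}$ are determined monotonically by the first index at which the two itineraries disagree, so the distribution functions of corresponding pairs differ only by a monotone reparametrization of the $\delta$-axis; moreover, for pairs in the subshift these functions are constant between consecutive values attained by the metric (compare the analogous observation in the proof of Lemma~\ref{shift}), so strict inequality on a nondegenerate interval forces strict inequality at one of these attained scale values, and after the reparametrization this yields strict inequality on a nondegenerate interval of $\delta'$ for the corresponding endpoint pair in $\mathcal G_{5_Y}$. You need to make this metric-specific argument (or some substitute for it) explicit; appealing to conjugacy invariance of DC3 is not legitimate.
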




\section{The strongest type of DC and other types of chaos}\label{sec:nesp.mn}
%
In \cite{Koc} and other literature are shown different relationships betwen DC1$_2$ and other types of chaos. How will the situation change, if we want that some kind of chaos will imply DC1$_u$ (the strongest possible DC) instead of just pairs?
The most important example is the horseshoe, because in \cite{Koc} it is the only studied property 
which implies DC1, but  it was shown just for DC1$_2$ .

\begin{lem}\label{shift}
The dynamical system $\left(\{0,1\}^{\mathbb N_0} , \sigma \right)$, where $\sigma$ is the shift map has an uncountable DC$1$ set.
\end{lem}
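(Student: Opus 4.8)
The plan is to construct by hand an uncountable family $\{\omega_s:s\in\{0,1\}^{\mathbb N}\}\subset\{0,1\}^{\mathbb N_0}$ of points, indexed by infinite binary sequences, every two of which form a DC1 pair. Throughout I would use the standard metric $d(x,y)=2^{-k}$, where $k$ is the least coordinate on which $x$ and $y$ differ, so that $\{0,1\}^{\mathbb N_0}$ has diameter $1$; with this metric $d(\sigma^n x,\sigma^n y)<\delta$ holds exactly when $x$ and $y$ agree on the window of length $m(\delta):=\lfloor\log_2(1/\delta)\rfloor+1$ starting at position $n$, and $d(\sigma^n x,\sigma^n y)=1$ holds exactly when $x_n\neq y_n$.

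First I would fix a length sequence $(L_m)_{m\ge1}$ growing fast enough that $L_m>m(L_1+\dots+L_{m-1})$ for every $m$ (for instance $L_1=1$ and $L_{m+1}=(m+1)(L_1+\dots+L_m)+1$), together with a map $r:\mathbb N\to\mathbb N$ attaining every value infinitely often. Then I would partition $\mathbb N_0$ into consecutive blocks $B_1,B_2,\dots$ with $|B_m|=L_m$, and for $s\in\{0,1\}^{\mathbb N}$ define $\omega_s$ to be identically $0$ on each odd block $B_{2k-1}$ and identically $s_{r(k)}$ on each even block $B_{2k}$. The odd blocks act as ``synchronisation'' blocks that will push the upper distribution function up to $1$, the even blocks act as ``separation'' blocks, and the role of choosing $r$ with infinite fibres is to guarantee that for two distinct indices $s\neq t$ the first coordinate $j$ on which they differ is reproduced on infinitely many separation blocks.

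Next I would verify the two halves of the DC1 condition for a fixed pair $s\neq t$, with $j:=\min\{i:s_i\neq t_i\}$. For the upper function: along $N$ equal to the right endpoint of $B_{2k-1}$, every $n$ whose length-$m(\delta)$ window lies inside $B_{2k-1}$ is counted, so the counting ratio is at least $\frac{L_{2k-1}-m(\delta)}{L_{2k-1}(1+1/(2k-1))}$ (using $L_1+\dots+L_{2k-2}\le L_{2k-1}/(2k-1)$), which tends to $1$; hence $\Phi^*_{(\sigma,\omega_s,\omega_t)}(\delta)=1$, and then this holds for every $0<\delta\le1$ by monotonicity. For the lower function: along $N$ equal to the right endpoint of a block $B_{2k}$ with $r(k)=j$, one has $\omega_s\equiv s_j\neq t_j\equiv\omega_t$ on all of $B_{2k}$, so the number of $n\le N$ with $d(\sigma^n\omega_s,\sigma^n\omega_t)<1$ is at most $L_1+\dots+L_{2k-1}\le N/(2k)$; since infinitely many such $k$ occur, $\Phi_{(\sigma,\omega_s,\omega_t)}(1)=0$. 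The same separation blocks show $\omega_s\neq\omega_t$ whenever $s\neq t$, so $s\mapsto\omega_s$ is injective and $\{\omega_s:s\in\{0,1\}^{\mathbb N}\}$ is an uncountable distributionally scrambled set of type $1$, as required.

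The one genuinely delicate ingredient — everything else (continuity of $\sigma$, the elementary metric facts above, monotonicity of $\Phi$ and $\Phi^*$) being routine — will be the bookkeeping with the growth rate: the lengths $L_m$ must grow fast enough that each single block swamps the total length of all preceding blocks, so that the ratios above genuinely converge to $1$ and to $0$, while a single alternating sequence of blocks still reaches right endpoints of both types along which the two distribution functions are read off.
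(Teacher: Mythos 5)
Your proposal is correct and is essentially the paper's own argument: both build an uncountable family of sequences from alternating constant blocks — long blocks of $0$'s forcing $\Phi^*\equiv 1$ and long blocks repeating a coordinate of the indexing sequence (recurring infinitely often, in your case via the map $r$ with infinite fibres, in the paper via the composition $\nu\circ\lambda$) forcing the lower distribution function to vanish — with block lengths growing fast enough that each block swamps the total length of all preceding ones. The only differences are cosmetic bookkeeping in how the repeated coordinates are scheduled and that you verify $\Phi(\epsilon)=0$ only at $\epsilon=1$, which the definition of DC1 indeed permits.
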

%
%
\begin{proof} 
\textbf{Definition of the metric:} To discuss DC chaos we have to specify a metric. Let $d$ be the metric on $\{0,1\}^{\mathbb N_0}$ given by $d(x,y) = {2^{-i}}$, where $i = \inf \{j: x_j \neq y_j\}$ and $x \in \{0, 1\}^{\mathbb N_0}$ represents an infinite sequence $x=x_0 x_1x_2 x_3\cdots$ of $0$s and $1$s. \\
\textbf{Construction of the uncountable scrambled set ($\Lambda$):} \\
We can either use a tail equivalence relation which involves the axiom of choice  or  we may follow a more constructive approach and take $\Lambda = \nu \left( \lambda\left(\{0,1\}^{\mathbb N_0} \right)\right)$, where  $\lambda\!:~\{0,1\}^{\mathbb N_0} \to \{0,1\}^{\mathbb N_0}$ provides tail inequivalence $\lambda(x_0 x_1x_2 \cdots) =  x_0 \ x_0 x_1 \ x_0x_1x_2  \  \cdots$ so if $x \neq y$ then $\lambda(x)_i \neq \lambda(y)_i $ infinitely many times $\left[ \lambda(x) \not \thicksim \lambda(y)\right]$
and then $\nu \!:~\{0,1\}^{\mathbb N_0} \to \{0,1\}^{\mathbb N_0}$,  provides blocks (of $x_i$'s and $0$'s) of sufficient length : $\nu \left(x_0 x_1x_2 x_3x_4x_5\cdots \right)= 0^{a_1} \, x^{a_2} _0 \, 0^{a_3}  \, x^{a_4} _1 x^{a_5} _2 \, 0^{a_6}  \, x^{a_7} _3 x^{a_8} _4 x^{a_9}_5 \, 0^{a_{10}} \,  \cdots$, (where $0^{a_i}$ denotes $0 \, \ a_i$ times). Then by composition we will get: 
\begin{equation}\label{nu}
\nu \left( \lambda(x_0 x_1x_2 x_3\cdots)\right)= 0^{a_1} \, x^{a_2} _0 \, 0^{a_3}  \, x^{a_4} _0 x^{a_5} _1 \, 0^{a_6}  \, x^{a_7} _0 x^{a_8} _1 x^{a_9}_2 \, 0^{a_{10}} \,  \cdots,
\end{equation}
 where $\{ a_i\}_{i \in \mathbb N}$ is an increasing sequence  (eg: $i!^{i!}$) such that 
\begin{equation}\label{a_i} 
\mbox{ $\dfrac{b_n +n}{a_{n+1}} \to 0\ $ for $\, n \to \infty$, \,where $b_n = \sum_{i=1}^n a_i$.}
\end{equation}
 Since both $\lambda$ and $\nu$ are injective maps, 
 $\Lambda = \nu \left( \lambda\left(\{0,1\}^{\mathbb N_0}\right)\right)$ is an uncountable set, and we claim that $\Lambda$ is also a DC$1$ set for the shift map $\sigma $.\\
 Moreover the construction of $\Lambda$ motivates us to think about elements $\widehat{x} \in \Lambda$ in blocks, where the $i$-th block 
 $\left( B_i  = 
 \left(\widehat{x}_{(b_{(i-1)}+1)} \ \dots \ \widehat{x}_{b_{i}} \right) \right)$
 has length $a_i$
 and for any element $\widehat{x} \in \Lambda$ there exists exactly one  $x \in \{0, 1\}^{\mathbb N_0}$ such that $\widehat{x} = \nu \circ \lambda (x)$ 
($\widehat{x} \neq \widehat{y}$ implies $x\neq y.$)
\\So for any two $\widehat{x} \neq \widehat{y} \in \Lambda$ there are not just infinitely many $i \in \mathbb N_0$ such that $\widehat{x}_i \neq \widehat{y}_i$ but infinitely many blocks $B_i$
such that $\widehat{x}_i \neq \widehat{y}_i$ for every $i$ in those blocks. Then for the calculation of $\Phi_{(\sigma,\widehat{x}, \widehat{y})}(\delta)$, the worst scenario is when we use $x,y$ which are different just in one coordinate or are very close (if there are more $j$'s such that $x_j \neq y_j$ or the first such $j$ is smaller, then  $\Phi_{(\sigma,\widehat{x}, \widehat{y})}(\delta)$ will be smaller/converge faster). For  $\Phi_{(\sigma,\widehat{x}, \widehat{y})}^*(\delta)$, the worst scenario is when we use $x,y$  which are different in every coordinate.\\
%
%
Calculation of \textbf{$\Phi_{(\sigma,\widehat{x}, \widehat{y})}(\delta)$} for $\delta \in (0,1\rangle$ (Instead of the definition we will use Note \ref{DCnote}.): \\
For any two $x \neq y \in \{0,1\}^{\mathbb N_0}$ there exists at least one $j$ such that $x_j \neq y_j $ (it does not matter which such $j$ will we use in the sequel, but for simplicity, let's take the smallest and denote it $j_0$). 
Then for $\widehat{x},\widehat{y}$ there exists a subsequence  $\{B_{i(j_0)}\}_{i \in \mathbb N} \subset  \{B_{i}\}_{i \in \mathbb N} $ of  blocks with sequence of lengths $\{a_{i(j)}\}_{i \in \mathbb N} $, such that
 $\widehat{x}_j \neq \widehat{y}_j$ for every  $\widehat{x}_j ,\widehat{y}_j \in B_{i(j_0)}$ and so $d (\sigma^j(\widehat{x}), \sigma^j(\widehat{y})) = 1$ for those $j$'s. \\
%
By (\ref{a_i}) ${a_{i}}/{b_i} \to 1$ as $i \to \infty$
and so \\
  $\displaystyle\lim_{i(j_0)\to\infty}\frac{1}{b_{i(j_0)}}\#\{b_{{i(j_0)}-1} \le k \le b_{i(j_0)};d(\sigma^k(\widehat{x}),\sigma^{k}(\widehat{y}))\ge \delta\}= \displaystyle\lim_{i(j_0) \to\infty}\frac{a_{i(j_0)}}{b_{i(j_0)}}=1$.\\
This shows that
 $$\Phi_{(\sigma,\widehat{x}, \widehat{y})}(\delta) \le 1-  \displaystyle\lim_{i(j_0)\to\infty}\frac{1}{b_{i(j_0)}}\#\{b_{{i(j_0)}-1} \le k \le b_{i(j_0)};d(\sigma^k(\widehat{x}),\sigma^{k}(\widehat{y}))\ge \delta\} =0 .$$

Calculation of \textbf{$\Phi^*_{(\sigma,\widehat{x}, \widehat{y})}(\delta)$} for $\delta \in (0,1\rangle$ (We get to use the blocks of 0's.) :\\
For every $\widehat{x} \in \Lambda$ there is a subsequence $\{B_{i_l}\}_{l \in \mathbb N}  \subset  \{B_{i}\}_{i \in \mathbb N} $ of blocks 
such that $\widehat{x}_j=0$ for every $\widehat{x}_j \in B_{i_l}$
where $i_l = \sum_{j=1}^{l}j $. 
\\
Since $d(\widehat{x},\widehat{y}) \in \{2^{-m}\}_{m\in \mathbb N}$, the function $\Phi^*_{(\sigma,\widehat{x}, \widehat{y})}$ is constant on intervals $(2^{-m},2^{-m+1}\rangle$, where $m \in \mathbb N$, so it is enough to check   $\Phi^*_{(\sigma,\widehat{x}, \widehat{y})}(\delta)$ for $\delta \in 2^{-m}$. But for every $m \in \mathbb N$ there exists an $N \in \mathbb N$ such that for every $l \ge N, \ a_{i_l} > m$ and since 
$\displaystyle\lim_{i \to \infty} \frac{a_{i+1}}{b_i +i} = 1, $
 $$\Phi^*_{(\sigma,\widehat{x}, \widehat{y})}(\delta) \ge \displaystyle\lim_{l \to \infty}\frac{1}{b_{i_l}}
 \#\{b_{{(i_l)}-1} < k \le b_{i_l}-m \ ;d(\sigma^k(\widehat{x}),\sigma^{k}(\widehat{y})) < \delta\} =1 $$
 That shows $\Phi_{(\sigma,\widehat{x}, \widehat{y})}(\delta) =0 $ and 
 $ \Phi^*_{(\sigma,\widehat{x}, \widehat{y})}(\delta)=1$ for any $\widehat{x}, \widehat{y}\in \Lambda $ and $\delta \in (0,1\rangle$.

\end{proof}
 
As a corollary of Lemma \ref{shift} we can strengthen the statement of Lemma 2 from \cite{Koc}.
\begin{thm}\label{Lem2}
Let $f$ be a continuous self-map of a dendrite. If an iterate of $f$ has an arc horseshoe then $f$ is DC$1_u$ and $\omega$-chaotic.
\end{thm}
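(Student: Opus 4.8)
The plan is to reduce the statement to Lemma~\ref{shift} via the standard machinery relating arc horseshoes on dendrites to the full $2$-shift. First I would replace $f$ by the iterate $g=f^N$ which has an arc horseshoe $A,B$ — that is, arcs intersecting at most in endpoints with $g(A)\cap g(B)\supset A\cup B$. Since DC$1_u$ and $\omega$-chaos for $g$ immediately yield the same for $f$ (an uncountable DC$1$-scrambled set for $g=f^N$ is, after passing to the $\max$ over the $N$ iterates of the metric, DC$1$-scrambled for $f$; similarly $\omega_g(x)\subseteq\omega_f(x)$ and one checks the three $\omega$-scrambling conditions survive), it suffices to treat $g$ directly. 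So from now on assume $f$ itself has an arc horseshoe $A,B$.

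Next I would build a Cantor set on which $f$ acts like the $2$-shift. Using $g(A)\cap g(B)\supset A\cup B$ and continuity, there are subarcs $A_0,A_1\subseteq A$ and $B_0,B_1\subseteq B$ with $f(A_i)\supseteq A\cup B\supseteq A_i\cup B_i$ and $f(B_i)\supseteq A\cup B$; iterating, for every finite word $w\in\{0,1\}^k$ one gets a nonempty subcontinuum (subarc) $I_w\subseteq A\cup B$ with $f^j(I_w)$ lying in the piece dictated by the $j$-th symbol of $w$, and $I_{w0},I_{w1}\subseteq I_w$. The nested intersection over an infinite word $s\in\{0,1\}^{\mathbb N_0}$ gives a point (choose any) $\pi(s)$ whose $f$-orbit shadows the itinerary $s$: $f^j(\pi(s))\in A$ if $s_j=0$ and $\in B$ if $s_j=1$. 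This $\pi$ is continuous and satisfies $\pi\circ\sigma=f\circ\pi$; it is the usual semiconjugacy from a closed $\sigma$-invariant subset of $\{0,1\}^{\mathbb N_0}$ (in fact all of it, by surjectivity of the itinerary map) onto a subsystem of $(X,f)$.

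Now I would transport the uncountable DC$1$-scrambled set $\Lambda\subseteq\{0,1\}^{\mathbb N_0}$ from Lemma~\ref{shift} through $\pi$. The key point is a separation estimate: because $A$ and $B$ are disjoint compact sets (an arc horseshoe on a dendrite can be refined to a \emph{strict} arc horseshoe, or one simply shrinks to disjoint subarcs $A'\subseteq A$, $B'\subseteq B$ still forming a horseshoe), there is $\eta=\operatorname{dist}(A',B')>0$. Whenever $s_j\neq t_j$ we get $d(f^j(\pi(s)),f^j(\pi(t)))\geq\eta$, and whenever $s_j=t_j$ on a long block the points $f^j(\pi(s)),f^j(\pi(t))$ both sit in one small arc, so their distance is small. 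Combined with the block structure of $\Lambda$ from Lemma~\ref{shift} — the long blocks of $0$'s forcing $\Phi^*=1$, the infinitely many long blocks on which the two itineraries disagree forcing $\Phi=0$ — this shows $\pi(\Lambda)$ is a DC$1$-scrambled set for $f$; injectivity of $\pi$ on $\Lambda$ (disagreeing itineraries give distinct points, by $\eta$-separation) keeps it uncountable. Hence $f$ is DC$1_u$. For $\omega$-chaos one argues similarly: by the classical result that subshifts of finite type (here the full $2$-shift) contain $\omega$-scrambled Cantor sets — or by directly exhibiting, for each $s$ in a suitable uncountable family, itineraries realizing prescribed uncountable $\omega$-limit sets with the required overlap and non-periodicity — and pushing forward through $\pi$, using that $\pi$ maps $\omega_\sigma(s)$ onto $\omega_f(\pi(s))$.

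The main obstacle I expect is not the shift dynamics but the \emph{refinement of the arc horseshoe}: verifying carefully that from $f(A)\cap f(B)\supset A\cup B$ on a dendrite one can extract subarcs whose images genuinely cover $A\cup B$ at every step (so that all the cylinder subarcs $I_w$ are nonempty and nested), and that they can be taken pairwise separated enough to produce the uniform gap $\eta$. On an interval this is the textbook covering argument; on a dendrite one must use that $A,B$ are arcs and that the image of an arc under a continuous map of a dendrite still behaves well enough (connectedness plus the intermediate-value-type property for arcs) to pull back covering relations. Once that combinatorial/topological skeleton is in place, transporting Lemma~\ref{shift} and the $\omega$-scrambling is routine.
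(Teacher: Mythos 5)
Your overall strategy (reduce to an iterate, code the horseshoe by itineraries, transport the uncountable DC1 set of Lemma \ref{shift}) is in the spirit of the paper, but the paper does not rebuild the coding: it simply invokes Ko\v{c}an's Lemma 2 from \cite{Koc}, which supplies a set $D$ on which an iterate of $f$ is 2-to-1 semiconjugate to the full 2-shift, and then adds Lemma \ref{shift} to upgrade DC$1_2$ to DC$1_u$. The genuine problem with your self-contained version is the step you declare routine, namely the transfer of the condition $\Phi^*_{(f,x,y)}(\delta)=1$ \emph{for every} $\delta>0$. From ``$s_j=t_j$ on a long block'' you only get that $f^j(\pi(s))$ and $f^j(\pi(t))$ lie in the same horseshoe arc (or in the same cylinder continuum $I_w$), which bounds their distance by the diameter of that arc, a fixed positive constant --- not by an arbitrary $\delta$. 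The nested intersections $\bigcap_w I_w$ can be nondegenerate continua (you implicitly concede this by saying ``choose any'' point), and their diameters need not shrink as the word length grows: think of a horseshoe arc containing a subarc of fixed points; two admissible itinerary-realizing points can then stay a definite distance apart throughout every shared block of $0$'s. So the push-forward of $\Lambda$ need not be DC1-scrambled, and no naive choice of $\pi$ fixes this. This is not a pedantic point: a semiconjugacy onto the full shift gives positive entropy, and by Theorem \ref{ThPik} (\cite{Pikula}) positive entropy does not even force a DC1 \emph{pair}, while \cite{Down} only yields DC2; hence any correct proof must use the specific structure of the arc horseshoe on the dendrite (this is exactly what the cited construction in \cite{Koc} provides), and cannot go through the bare itinerary coding.

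Two secondary issues: the continuity of your selection map $\pi$ is unjustified for the same reason (nondegenerate fibers), and your $\omega$-chaos argument leans on $\pi(\omega_\sigma(s))=\omega_f(\pi(s))$ and on uncountability of differences of images, both of which need that continuity (or an injectivity/finite-to-one control on $\omega$-limit sets); the paper sidesteps this by citing \cite{Li93} through Ko\v{c}an's lemma. By contrast, the obstacle you flag as the main one --- pulling back covering relations for arcs in a dendrite to get nonempty nested cylinders --- is real but standard (it is handled in \cite{KKM}/\cite{Koc}); the actual crux is the proximity half of DC1, which your outline leaves unproved.
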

%
%
\begin{proof} 
In \cite{Koc}  this lemma was proved for DC$1_2$. The author used the fact that if an iterate of the map $f$ has an arc horseshoe, then there is a set $D \subset X$ such that some iterate of $f$ restricted to $D$ is 2-to-1 semiconjugate to the shift map $\sigma$ on the space $\{0,1\}^{\mathbb N_0}$ which by \cite{HaM} and \cite{Li93} is DC$1_2$ and $\omega$-chaotic and thus $f$ is as well DC$1_2$ and $\omega$-chaotic. Moreover by Lemma \ref{shift}, $\left(\{0,1\}^{\mathbb N_0} , \sigma \right)$ is also DC$1_u$.
\end{proof}

\section{Open questions and remarks}\label{sec:QR}
While we showed that already on dendrites DC3 $\not \Rightarrow$ DC2 $\not \Rightarrow$ DC1 for any size of DC-scrambled set, nor do DC-pairs imply infinite DC-scrambled sets, there are at least 2 more important questions for DC on dendrites:
\begin{Q}
Do DC2$\frac12$ and DC2  coincide on dendrites?
 \end{Q}
  Or we can single out an even more obvious question (since DC3 $\not \Rightarrow$ DC2$\frac12$). 
  \begin{Q}
  Does DC3 imply at least LY on dendrites? 
  \end{Q}
To be clear, we are asking about the implications DC$3_2 \overset{?}{\Rightarrow}$ LY$_2$ and DC$3_u \overset{?}{\Rightarrow}$ LY$_u$,  since we know from \cite{TD} that DC$1_2 \not \Rightarrow$ LY$_\infty$.\\
Moreover we can ask if  any of the theorems in this paper would change, if we required the functions on dendrites to be monotone (ours are not).

\section*{Acknowledgments} \noindent 
This research was supported by grant SGS 16/2016 from  the Silesian University in Opava and RVO funding for IC47813059.\\
The author also wishes  to thank  Zden\v ek Ko\v can and Samuel Roth for their personal guidance, corrections and support. 


\end{document}